


%


\documentclass[11pt, sort&compress]{elsarticle} 

\usepackage{tikz}
\usetikzlibrary{arrows}

\usepackage{fullpage}
\usepackage{amsfonts} 
\usepackage{graphics}


\usepackage{url}      

\usepackage{graphicx}
\usepackage{amsmath, amsthm, amssymb}
\usepackage{amssymb,amsmath,graphics,color}

\usepackage{epsfig}
\usepackage{epstopdf}

\usepackage{multirow}

\usepackage{tocloft}


\usepackage{xr} 



\externaldocument[ABG2-]{./ABGraphs-v62-arxiv}

\usepackage{xcolor}

\definecolor{darkbrown}{rgb}{.5,.1,.1} 

\usepackage{here} 

\usepackage{relsize} 


\usepackage{refcheck} 
\norefnames	
\nocitenames 	
\ignoreunlbld 
\setoffmsgs 


%
%
%
%
%
%
%
%



\renewcommand{\today}
{%
\number \day 
\ifnum  \day =1 st
\else
	\ifnum  \day =2 nd
	\else
		\ifnum  \day =3 rd
		\else
			\ifnum  \day =21 st
			\else
				\ifnum  \day =22 nd
				\else
					\ifnum  \day =23 rd
					\else
						\ifnum  \day =31 st
						\else
							th
						\fi
					\fi
				\fi
			\fi
		\fi
	\fi
\fi
\ifcase \month \or January\or February\or March\or April\or May \or June\or July\or August\or September\or October\or November\or December\fi 
{ \number \year}%
}


\def\boxit [#1]#2{\vbox{\hrule\hbox{\vrule
     \vbox spread #1{\vss\hbox spread#1{\hss #2\hss}\vss}%
        \vrule}\hrule}}

\newbox\algo

\font\algofont=cmtt10 scaled 1100


\newenvironment {algorithme}{\smallskip \smallskip
\bgroup\algofont\parindent= 0mm}{\egroup
\smallskip\smallskip}


\newtheorem{thm}{Theorem}[section]
\newtheorem{cor}[thm]{Corollary} 
\newtheorem{prop}[thm]{Proposition} 
\newtheorem{lemma}[thm]{Lemma}
\newtheorem{remark}[thm]{Remark}
\newtheorem{definition}[thm]{Definition}
\newtheorem{observation}[thm]{Observation}
\newtheorem{example}[thm]{Example}

\newtheorem{hors-property}[subsubsection]{Property}
\newtheorem{hors-lemma}[subsubsection]{Lemma}
\newtheorem{hors-remark}[subsubsection]{Remark}
\newtheorem{hors-definition}[subsubsection]{Definition}


\def\newline{\hfill\break}

\def\newpage{\vfill\break}





\def\min{\mathrm{min}}

\def\max{\mathrm{max}}



\def\n{\omega}

\def\ov{\overrightarrow}
\def\s{\setminus}
\def\bk{\backslash}


\def\ms{\medskip}
\def\ss{\smallskip}


\def\n{\omega}




\def\bul{$\bullet$ }

\def\G{{\ov G}}

\def\C{C_{\hbox{\small opt}}}

\def\bysame{\rule{1cm}{0.1mm}}

\newcommand{\overbar}[1]{\mkern 1.5mu\overline{\mkern-1.5mu#1\mkern-1.5mu}\mkern 1.5mu}
\def\bar{\overbar}    



\def\eme#1{}
\def\emeref#1{}
\def\emevder#1{}


\title{Computing the fully optimal spanning tree \\ of an ordered bipolar directed graph}


\author[lirmm]{Emeric Gioan\corref{cor1}}
\ead{emeric.gioan@lirmm.fr}

\author[paris]{Michel Las Vergnas\corref{cor2}}

\cortext[cor1]{Email: emeric.gioan@lirmm.fr}
\cortext[cor2]{R.I.P.}
\address[lirmm]{CNRS, LIRMM, Universit\'e de Montpellier,  France}
\address[paris]{CNRS, Paris, France}

\date{}


\begin{document}

\begin{abstract} 
\noindent
It has been previously shown by the authors that a directed graph on a linearly ordered set of edges (ordered graph) with adjacent unique source and sink (bipolar digraph) has a unique 
fully optimal spanning tree, that satisfies a simple criterion on fundamental cycle/cocycle directions.
This result yields, for any ordered graph, a canonical bijection between bipolar orientations and spanning trees with internal activity 1 and external activity 0 in the sense of the Tutte polynomial. This bijection can be extended to  all orientations and all spanning trees, yielding the active bijection, presented for graphs in a companion paper.
In this paper, we specifically address the problem of the computation of the fully optimal spanning tree of an ordered bipolar digraph. In contrast with the inverse mapping, built by a straightforward single pass over the edge set, the direct computation is not easy and had previously been left aside.
We give two independent constructions. The first one is a deletion/contraction recursion, involving an exponential number of minors. 
\emevder{It is structurally significant but }%
It is structurally significant but it is efficient only for building the whole 
bijection (i.e. all images) at once. The second one is more complicated and is the main contribution of the paper. It involves 
just one minor for each edge of the resulting spanning tree,
and it is a translation and an adaptation 
in the case of graphs, 
in terms of weighted cocycles,
of a general geometrical 
linear programming type 
algorithm, 
which allows for a polynomial time~complexity.
\emevder{mettre which yields a polynomial time~complexity ?}
\emevder{parler de wieght de cocycles ?}
\end{abstract}

\maketitle   


\emevder{faire corrections d'anglais, comme dans chapter, voir fichier anglais.tex  passage "SUR CHAPTER"}%

\section{Introduction}
\label{sec:intro}

\emevder{*** HYPER IMPORTANT  *** remplacer 'greater than' par 'greater than or equal to' quand necessaire *****}

In a previous paper \cite{GiLV05} (see also \cite{AB1}), we showed that a directed graph $\G$ on a linearly ordered set of edges,  with adjacent unique source and sink connected by the smallest edge (ordered bipolar digraph), has a unique remarkable spanning tree that satisfies a simple criterion on fundamental cycle/cocycle directions, and that we call \emph{the fully optimal spanning tree $\alpha(\G)$ of $\G$}.
 Associating bipolar orientations of an ordered graph with their fully optimal spanning trees provides a canonical bijection with spanning trees with internal activity 1 and external activity 0 in the sense of the Tutte polynomial \cite{Tu54}. It is a classical result from \cite{Za75} that those two sets have the same size,  also known as the $\beta$-invariant $\beta(G)$ of the graph \cite{Cr67}.  
We call this bijection \emph{the uniactive bijection of $G$}.
 
 This bijection can be extended to all orientations and all spanning trees, yielding 
\emph{the active bijection}, introduced in terms of graphs first in \cite{GiLV05}, and detailed next in \cite{ABG2}, for which the present paper is a complementary companion paper.
 Beyond graphs, the general context of the active bijection is oriented matroids, as  studied by the authors in a series of papers, even more detailed, notably~\cite{AB1, AB2-b, AB3, AB4}.\break
 See the introductions of \cite{ABG2} (or also \cite{AB2-b}) for an overview, more information and references.
 The general purpose of these works is to study graphs (or hyperplane arrangements, or oriented matroids) on a linearly ordered set of edges (or ground set), under 
 various 
 structural and enumerative aspects.
%




In the present paper we address the problem of computing the fully optimal spanning tree. 
Its existence and uniqueness is a tricky combinatorial theorem, proved in \cite[Theorem 4]{GiLV05} (see also the companion paper \cite[Section \ref{ABG2-subsec:fob}]{ABG2} for a short sum up in graphs, see also \cite[Theorem 4.5]{AB1} for a generalization and a geometrical interpretation in oriented matroids, see also \cite[Section 5]{AB2-b}\emevder{verifier ref} for a summary of various interpretations and implications of this theorem). 
As recalled in Section \ref{subsec:prelim-fob}, the inverse mapping, producing a bipolar orientation for which a given spanning tree is fully optimal, is very easy to compute by a single pass over the ordered set of edges. 
But the direct computation is complicated and it had not been addressed in previous papers. When generalized to real hyperplane arrangements, the problem contains and strengthens the real linear programming problem (as shown in~\cite{AB1}, hence the name \emph{fully optimal}). 
%
This ``one way function'' feature is a noteworthy aspect of the active bijection.
Here, we give two independent constructions to compute the mapping, that is to compute the fully optimal spanning tree of an ordered bipolar digraph.


The first construction,  in Section \ref{sec:induction}, is recursive, by deletion/contraction of the greatest element. 
Let us observe that it is usual to have some deletion/contraction constructions when the Tutte polynomial is involved, and that this construction fits a general framework involving all orientations and spanning trees, presented in \cite[Section \ref{ABG2-subsec:ind-framework}]{ABG2}\emevder{ou juste citer section  \cite[Section \ref{ABG2-sec:induction}]{ABG2} pas sousection} and detailed in \cite{AB4}.
This construction of the mapping has a short statement and proof, and it can be used to efficiently build the whole bijection at once (i.e. all the images simultaneously, see Remark \ref{rk:ind-10}).
So, it is satisfying for the structural understanding and for a global approach,
but it is not satisfying in terms of computational complexity for building one single image
as it involves an exponential number of minors. 
%

%

The second construction, in Section \ref{sec:fob},  is more technical and is the main contribution of the paper. It is 
 efficient from the computational complexity viewpoint because it involves 
 only one minor for each edge of the resulting spanning tree,
 and it consists in searching, successively in each minor, for the smallest cocycle with respect to a linear ordering of the set of cocycles induced by a suitable weight function.
This algorithm is an adaptation in the case of graphs (implicitly using that graphic matroids are binary) of a general geometrical construction 
obtained by elaborations on
pseudo/real linear programming (in oriented matroids / real hyperplane arrangements). Briefly, the ordering of cocycles
is a substitue for some
multiobjective programming, where a vertex (geometrical counterpart of a cocycle) is optimized with respect to a sequence of objective functions (transformed here into weights on edges).
%
By this way, the fully optimal spanning tree can be computed in polynomial time.
See  Remark \ref{rk:lp} and the end of Section \ref{sec:fob} for more discussion. See  \cite{AB3} for the general geometrical construction.
See also \cite{GiLV09} for a short formulation of the same algorithm in terms of real hyperplane arrangements. 
See \cite{AB1} for the primary relations between the full optimality criterion and usual linear programming optimality (see also \cite{GiLV04} in the uniform case).


In addition, 
let us recall from \cite[Section 4]{GiLV05}  (see also \cite[Section \ref{ABG2-subsec:fob}]{ABG2} for more details) that the bijection between bipolar orientations and their fully optimal spanning trees  directly yields a  bijection between cyclic-bipolar orientations (the strongly connected orientations obtained from bipolar orientations by reversing 
the source-sink edge) and spanning trees with internal activity $0$ and external activity~$1$.
Hence, the algorithms developed here can also be used for this second bijection.
Let us mention that 
 this framework
involves a remarkable duality property, 
called \emph{the active duality},
which is reflected in several ways. 
First, those two bijections are related to each other consistently with cycle/cocycle duality (that is oriented matroid duality, which extends planar graph duality, see \cite[Section \ref{ABG2-subsec:fob}]{ABG2} in graphs, see also \cite[Section 5]{AB1}, or \cite[Section 5]{AB2-b}\emevder{----verifier ref----} for a complete overview). Second, this duality property can be seen as a strengthening of linear programming duality (see \cite[Section 5]{AB1}). 
Third, it is related to the equivalence of  two dual  formulations in the deletion/contraction construction (see Remark~\ref{rk:ind-10-equivalence}).


Lastly, it is important to  point out that the two aforementioned constructions of the fully optimal spanning tree do not give a new proof of its existence and uniqueness:
on the contrary, 
this crucial fundamental result 
is used
to ensure the correctness of these two constructions.


%
%


\section{Preliminaries}
\label{sec:prelim}

\subsection{Usual terminology and tools from oriented matroid theory}
\label{subsec:prelim-tools}

\emevder{remplacer le plus possible par ordered graph ?}%

All graphs considered in this paper will be connected.
They can have loops and multiple edges. 
%
%
A \emph{digraph} is a directed graph, and an \emph{ordered graph} is a graph $G=(V,E)$ on a linearly ordered set of edges $E$. 
Edges of a directed graph are supposed to be \emph{directed} or equally \emph{oriented}.
A directed graph will be denoted with an arrow, $\G$, and the underlying undirected graph without arrow, $G$. 
%
The cycles,  cocycles, and spanning trees of a graph $G=(V,E)$ are considered as subsets of $E$,
hence their edges can be called their \emph{elements}. 
The cycles and cocycles of $G$ are always understood as being minimal for inclusion.
Given $F\subseteq E$, we denote $G(F)$ the graph obtained by restricting the edge set of $G$ to $F$, that is the minor $G\s (E\s F)$ of $G$.
%
A minor $\G/\{e\}$, resp. $\G\bk\{e\}$, for $e\in E$, can be denoted for short $\G/e$, resp. $\G\bk e$.
For $e\in E$, we denote $-_e\G$ the digraph obtained by reversing the direction of the edge $e$ in $\G$.
%
%

Let $G$ be an ordered (connected) graph and let $T$ be  a spanning tree of $G$. 
For $t\in T$, the \emph{fundamental cocycle} of $t$ with respect to $T$, denoted $C_G^*(T;t)$, or $C^*(T;t)$ for short,  is the cocycle joining the two connected components of $T\setminus \{t\}$. Equivalently, it is the unique cocycle contained in $(E\s T)\cup\{t\}$.
For $e\not\in T$, the \emph{fundamental cycle} of $e$ with respect to $T$, denoted $C_G(T;e)$, or $C(T;e)$ for short,
 is the unique cycle contained in $T\cup\{e\}$.

\medskip


The technique used in the paper is close from oriented matroid technique, which notably means that it focuses on edges, whereas vertices are usually not used.
%
%
Given an orientation $\G$ of a graph $G$, we will have to deal with directions of edges in cycles and cocycles of the underlying graph $G$,  and, sometimes,  to deal with combinations of cycles or cocycles. To achieve this, it is  convenient to use some practical notations and classical properties from oriented matroid theory~\cite{OM99}.
%

A \emph{signed edge subset} is a subset $C\subseteq E$ provided with a partition  into a positive part $C^+$ and a negative part $C^-$.
A cycle, resp. cocycle, of $G$ provides two opposite signed edge subsets called \emph{signed cycles}, resp. \emph{signed cocycles}, of $\G$ by giving a sign in $\{+,-\}$ to each of its elements accordingly with the orientation $\G$ of $G$ the natural way.  
Precisely: two edges having the same direction with respect to a running direction of a cycle will have the same sign in the associated signed cycles, and  two edges having the same direction with respect to the partition of the vertex set induced by a cocycle will have the same sign in the associated signed cocycles.
%
In particular, a directed cycle, resp. a directed cocycle, of $\G$ corresponds to a signed cycle, resp. a signed cocycle, all the elements of which are positive (and to its opposite, all the elements of which are negative).
We will often use the same notation $C$ either for a signed edge subset (formally a couple $(C^+,C^-)$, e.g. signed cycle) or for the underlying subset ($C^+\uplus C^-$, e.g. graph cycle).
%
\emevder{eventuellement faire en sorte d'enlever ce qui suit, mais bon sinon laisser car ca fait chier de nettoyer et on s'en sert dans autres papiers...}%
When necessary, given a spanning tree $T$ of $G$ and an edge $t\in T$, resp. an edge $e\not\in T$, the fundamental cocycle $C^*(T;t)$, resp. the  fundamental cycle $C(T;e)$, induces two opposite signed cocycles, resp. signed cycles, of $\G$; then, by convention, 
the (signed) fundamental cocycle $C^*(T;t)$, resp. the (signed) fundamental cycle $C(T;e)$, is considered to be the one in which $t$ is positive, resp. $e$ is positive.

\bigskip

The next three tools can be skipped in a first reading, as they will only be used in the proof of the main result of the paper, namely Theorem \ref{th:fob}.
First, let us  recall the definition of the \emph{composition} $C\circ D$ between two signed edge subsets as the edge subset $C\cup D$ with signs inherited from $C$ for the element of $C$ and inherited from $D$ for the elements of $D\setminus C$.
We will use the classical {\it orthogonality} property between a cocycle $D$ and a composition of cycles $C$ of $\G$, that is: $C\cap 
D\not=\emptyset$ implies
$(C^+\cap D^+)\cup(C^-\cap D^-)\not=\emptyset$ and $(C^-\cap D^+)\cup(C^+\cap 
D^-)\not=\emptyset$.

Second, we recall that, given two cocycles $C$ and $C'$ of $\G$,
and an element $f \in C \cup C'$ which does not have opposite signs in $C$ and $C'$,
there exists a cocycle $D$ obtained by \emph{elimination between $C$ and $C'$
preserving $f$} such that  $f\in D$, 
$D^+ \subseteq  C^+\cup C'^+$,
$D^- \subseteq  C^-\cup C'^-$,
and $D$ contains no element of $C\cap C'$ having opposite signs in $C$ and $C'$. This last property is a strengthening of the oriented matroid elimination property in the particular case of digraphs, 
a short proof of which is the following.
Assume $C$ defines the partition $(C_1,C_2)$ of the set of vertices, and
$C'$ defines the partition $(C'_1,C'_2)$, with a positive sign given to edges from $C_1$ to $C_2$ in $C$ and from $C'_1$ to $C'_2$ in $C'$. Then the edges having opposite signs in $C$ and $C'$ are those joining $C_1\cap C'_2$ and $C_2\cap C'_1$, then, with $V'= (C_1\cap C'_2) \cup (C_2\cap C'_1)$, the cut defined by the partition $(V',E\setminus V')$ contains a cocycle answering the problem.
\emevder{faire lemme de cette elimination speciale ?}%

Third, we recall the following easy property.
Let $A,B\subseteq E$ with $A\cap B=\emptyset$, such that the minor $G/B\backslash A$ is connected (or equivalently: $G/B\backslash A$ has the same rank as $G/B$).
If $D'$ is a cocycle of $G/B\backslash A$, then there exists a unique 
cocycle $D$ of $G$ such that $D\cap B=\emptyset$ and $D\setminus A=D'$. 
If the graphs are directedd then $D$ has the same signs as $D'$ on the elements of $D'$. We say that $D'$ is \emph{induced by} $D$, or that $D$ \emph{induces} $D'$. 

\subsection{Bipolar orientations and fully optimal spanning trees}
\label{subsec:prelim-fob}

We say that a directed graph $\G$  on the edge set $E$ is {\it bipolar with respect to $p\in E$} if  $\G$ is acyclic and has a unique source and a unique sink which are the extremities of $p$.
In particular, if $\G$ consists in a single edge $p$ which is an isthmus, then $\G$ is bipolar with respect to $p$.
Equivalently, $\G$ is bipolar with respect to $p$ if and only if every edge of $\G$ is contained in a directed cocycle and every directed cocycle contains $p$ 
(for information: in other words, $\G$ has dual-orientation-activity $1$ and orientation-activity $0$, in the sense of \cite{LV84a}, see  \cite{GiLV05, AB2-b} or \cite[Section \ref{ABG2-subsec:prelim-beta}]{ABG2}).
%
%
Another characterization is the following: $\G$ is bipolar w.r.t. $p$  if and only if $\G$ is acyclic and $-_p\G$ is strongly connected (for information:  those orientations $-_p\G$ play an equivalent dual role, see the discussion at the end of Section \ref{sec:intro} or \cite[Section \ref{ABG2-subsec:fob}]{ABG2}).
%
%

%
%


\begin{definition}
\label{def:acyc-alpha}
Let $\G=(V,E)$ be a directed graph, on a linearly ordered set of edges, which is bipolar 
with respect to the minimal element $p$ of $E$. The {\it fully optimal spanning tree} $\alpha(\G)$ of $\G$ is the unique spanning tree $T$ of $G$ such~that:
\smallskip

\bul for all $b\in T\setminus p$, the directions (or the signs) of $b$ and $\min(C^*(T;b))$ are opposite in $C^*(T;b)$;

\bul for all $e\in E\s T$, the directions (or the signs) of $e$ and $\min(C(T;e))$ are opposite in $C(T;e)$.
\end{definition}

The existence and uniqueness of such a spanning tree is the main result of \cite{GiLV05, AB1}, along with the next theorem. 
Notice that a directed graph and its opposite are mapped onto the same spanning tree.
We say that spanning tree $T$ has \emph{internal activity $1$ and external activity $0$}, or equivalently that $T$ is \emph{uniactive internal}, if: $min(E)\in T$; for every $t\in T\s min(E)$ we have $t\not=\min(C^*(T;t))$; and for every $e\in E\s T$ we have $e\not=\min(C(T;e))$. 

In this paper, it is not necessary to define further the notion of activities of spanning trees, which comes from the theory of the Tutte polynomial (see \cite{GiLV05, ABG2, AB2-b}). 
\emevder{verifier que activities pas utilisees}%
%
%
%
For information  (not used in the paper), the number of uniactive internal spanning trees of $G$ does not depend on the linear ordering of $E$ and is known as the $\beta$-invariant $\beta(G)$ of $G$~\cite{Cr67}, while the number of bipolar orientations w.r.t. $p$
does not depend on $p$ and is equal to $2.\beta(G)$ \cite{Za75}.
\emevder{OU : while the number of bipolar orientations w.r.t. $p$, with the same fixed orientation for $p$, does not depend on $p$ and is also equal to $\beta(G)$ \cite{Za75}.}%
\emevder{faut-il laisser ce "for information..."?}%


%
%

\begin{thm}[Key Theorem \cite{GiLV05, AB1}]
\label{thm:bij-10}
Let $G$ be a graph on a linearly ordered set of edges $E$ with $\min(E)=p$.
The mapping 
$\G\mapsto \alpha(\G)$ yields
a bijection between all bipolar orientations of $G$ w.r.t.~$p$, with the same  fixed orientation for $p$, and all 
uniactive internal spanning trees of $G$.
\end{thm}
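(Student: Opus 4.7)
The plan is to deduce bijectivity from three ingredients that chain together with the existence and uniqueness of $\alpha(\G)$ (assumed as quoted above). First I would verify that the image lands in the target set, then that $\alpha$ is injective, and finally that $\alpha$ is surjective.

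For the first point, let $T := \alpha(\G)$; I would check that $T$ is uniactive internal. Each of the three defining conditions follows at once from the $\alpha$-criterion. For instance, if $p \notin T$, then $C(T;p)$ contains $p$ and $\min(C(T;p)) = p$ (as $p = \min(E)$); the second bullet of Definition~\ref{def:acyc-alpha} would then require the direction of $p$ to be opposite to its own direction in $C(T;p)$, which is impossible. The same impossibility rules out $t = \min(C^*(T;t))$ for $t \in T \setminus p$ and $e = \min(C(T;e))$ for $e \notin T$.

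For injectivity, once the direction of $p$ is fixed, the $\alpha$-criterion determines every other direction by induction along the linear order. Indeed, for $f > p$ with $f \in T$, the previous paragraph ensures that $\min(C^*(T;f))$ is a non-tree edge strictly smaller than $f$, and hence already oriented; the criterion then prescribes the direction of $f$. The non-tree case is symmetric. Two bipolar orientations sharing both the same image $T$ and the same orientation of $p$ must therefore coincide.

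For surjectivity I would reverse this procedure: starting from a uniactive internal $T$ and a prescribed direction of $p$, build an orientation $\G$ edge by edge in increasing order by using the $\alpha$-criterion as a construction rule (a single sweep over $E$). The main obstacle, and indeed the crux of the theorem, is to verify that the resulting $\G$ is actually bipolar with respect to $p$. One natural route is an induction on $|E|$ via deletion/contraction of $\max(E)$, along the lines announced in Section~\ref{sec:induction}; another is a direct orthogonality argument ruling out any directed cycle and any directed cocycle avoiding $p$ in the constructed digraph. Once bipolarity is established, the uniqueness clause of the existence-and-uniqueness statement gives $\alpha(\G) = T$. Alternatively, one may bypass this verification altogether by counting: by \cite{Za75} and \cite{Cr67} both sides have cardinality $\beta(G)$, so the injectivity established above alone forces bijectivity.
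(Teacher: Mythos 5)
This theorem is not proved in the paper: it is imported as a ``Key Theorem'' from \cite{GiLV05, AB1}, and the authors stress at the end of Section~\ref{sec:intro} that neither of their constructions re-derives it --- both \emph{use} it to guarantee their own correctness. So there is no in-paper proof to compare against, and your argument has to stand on its own. Your first two steps do: the observation that the sign criterion forces $T$ to be uniactive internal (in particular $p\in T$) is exactly the remark recorded inside the proof of Lemma~\ref{lem:induc-fob-basis}, and your injectivity argument is precisely the single-pass reconstruction of Proposition~\ref{prop:alpha-10-inverse} --- the key point, which you state correctly, being that $\min(C^*(T;f))$ (resp.\ $\min(C(T;e))$) is an already-oriented edge strictly smaller than $f$ (resp.\ $e$), so the orientation is determined by $T$ and the direction of $p$.

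For surjectivity, only your counting route is complete as written, and it is fine: granting existence and uniqueness of $\alpha(\G)$ (which you assume, as the paper itself does) and the cited enumerations --- $\beta(G)$ uniactive internal spanning trees \cite{Cr67}, $2\beta(G)$ bipolar orientations w.r.t.\ $p$ \cite{Za75}, hence $\beta(G)$ of them with the orientation of $p$ fixed --- an injection between finite sets of equal cardinality is a bijection, and neither count depends on the bijection, so there is no circularity. Your other two routes deserve a warning. Route (a), deletion/contraction ``along the lines of Section~\ref{sec:induction}'', would be circular inside this paper: the proofs of Lemma~\ref{lem:induc-fob-basis} and Corollary~\ref{cor:ind-10} already invoke Theorem~\ref{thm:bij-10}; an independent induction exists (it is the subject of \cite{AB4}) but that is not what citing Section~\ref{sec:induction} provides. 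Route (b), verifying directly that the swept orientation is bipolar, is exactly the nontrivial content of \cite[Propositions 2--3]{GiLV05}; naming it ``the crux'' without carrying it out leaves that version a sketch. So commit explicitly to the counting argument; it is correct and arguably more elementary than the original constructive proof, at the price of being non-constructive and of leaning on two external enumeration theorems.
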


The bijection of Theorem \ref{thm:bij-10} is called \emph{the uniactive bijection} 
of the ordered graph $G$.

For completeness of the paper (though not used thereafter), let us recall that,
from the constructive viewpoint, this bijection was built in \cite{GiLV05,AB1} by the  inverse mapping, provided  by a single pass algorithm over $T$ and fundamental cocycles, or dually over $E\setminus T$ and fundamental cycles.
This algorithm is illustrated in \cite[Figure 1]{GiLV05}, on the same example that we will use in Section~\ref{sec:fob}.
Equivalently, 
the inverse mapping can be obviously built by  a single pass over $E$, choosing edge directions one by one so that the criterion 
of Definition \ref{def:acyc-alpha} is satisfied. 
We recall this algorithm below 
(as done also in \cite{AB1} and \cite[Section \ref{ABG2-subsec:basori}]{ABG2}).
\emevder{pas utile de repeter cet algo ici... on peut juste remarquer qu'il suffit de signer elemetns un par un et renvoyer a ABG2... en meme temps ca complete... laisser ou pas ?}%
The reader interested in a geometric intuition on the full optimality sign criterion can have a look at the equivalent definitions, illustrations and interpretations given in \cite{AB1, AB2-b, AB3}.
\emevder{utile de dire ce geometric intutiion? reference a remark LP ?}

\begin{prop}[{self-dual reformulation of \cite[Proposition 3]{GiLV05}}]
\label{prop:alpha-10-inverse}
Let $G$ be a graph on  a linearly ordered set of edges $E=\{e_1,\dots,e_n\}_<$. 
For a uniactive internal spanning tree $T$ of $G$,
the two opposite orientations of $G$ whose image under $\alpha$ is $T$ are computed by the following algorithm.

\begin{algorithme}
Orient $e_1$ arbitrarily.\par
For $k$ from $2$ to $n$ do\par
\hskip 10 mm if $e_k\in T$ then \par
\hskip 20 mm let $a=\min (C^*(T;e_k))$\par
\hskip 20 mm orient $e_k$ in order to have $a$ and $e_k$ with opposite directions in $C^*(T;e_k)$\par
\hskip 10 mm if $e_k\not\in T$ then \par
\hskip 20 mm let $a=\min (C(T;e_k))$\par
\hskip 20 mm orient $e_k$ in order to have $a$ and $e_k$ with opposite directions in $C(T;e_k)$\par
\end{algorithme}

\end{prop}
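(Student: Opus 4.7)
The plan is to verify three things: (i) the algorithm is well-defined, meaning the reference edge $a$ is already oriented whenever it is needed; (ii) the output satisfies the full optimality criterion of Definition~\ref{def:acyc-alpha}; and (iii) the two possible outputs (one for each orientation of $e_1$) are precisely the two opposite bipolar orientations mapping to $T$ under $\alpha$.

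For (i), I would use the uniactive internal hypothesis on $T$. Since $\min(E)\in T$ and $\min(E)=p=e_1$, this edge is oriented in the initial step. For $k\geq 2$, if $e_k\in T$ then $e_k\neq p$, so by hypothesis $\min(C^*(T;e_k))\neq e_k$; since $e_k\in C^*(T;e_k)$, this forces $a=\min(C^*(T;e_k))<e_k$. A symmetric argument handles the case $e_k\notin T$. Hence in both cases $a$ was oriented during a previous iteration. Claim (ii) is then immediate from the algorithm's prescription: the direction chosen for each $e_k$ with $k\geq 2$ is exactly the sign relation required by Definition~\ref{def:acyc-alpha} on the cocycle $C^*(T;e_k)$ or on the cycle $C(T;e_k)$, and the condition involves only $e_k$ together with the single reference edge $a<e_k$.

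For (iii), I would invoke Theorem~\ref{thm:bij-10}, which guarantees the existence of a bipolar orientation $\G$ of $G$ w.r.t.\ $p$ with $\alpha(\G)=T$ --- by definition such $\G$ satisfies the full optimality criterion. Conversely, once $e_1=p$ is oriented, the criterion imposes exactly one sign constraint on each $e_k$, for $k\geq 2$, depending only on the already-oriented edge $a$; this determines the direction of $e_k$ uniquely. Consequently, for a given orientation of $p$, the algorithm produces the unique orientation of $G$ extending this choice and satisfying the criterion, and this orientation must be the bipolar one supplied by Theorem~\ref{thm:bij-10}. The two opposite choices for $e_1$ then yield the two opposite bipolar orientations of the statement.

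The main potential obstacle is a subtle circularity: one might worry whether the orientation produced by the algorithm is even acyclic, let alone bipolar w.r.t.\ $p$. I would bypass this by relying on the existence part of Theorem~\ref{thm:bij-10}: since a bipolar orientation matching the sign criterion exists, and the criterion propagates deterministically from $e_1$ forward via the algorithm, the algorithm must reproduce it. In other words, the present proof does not attempt to re-prove the existence and uniqueness of fully optimal spanning trees; it merely shows that the inverse map previously constructed in~\cite{GiLV05} via fundamental cycles/cocycles can equivalently be computed by this simpler one-pass procedure over~$E$.
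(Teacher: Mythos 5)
Your proposal is correct and follows essentially the same route as the paper, which presents this proposition as an immediate reformulation of \cite[Proposition 3]{GiLV05}: the criterion of Definition~\ref{def:acyc-alpha} constrains each edge $e_k$ ($k\geq 2$) only through a strictly smaller reference edge $a$ (well-defined because $T$ is uniactive internal), so the orientation is propagated deterministically from $e_1$, and the existence/uniqueness guaranteed by Theorem~\ref{thm:bij-10} is invoked rather than re-proved to conclude that the two outputs are exactly the two opposite bipolar orientations in $\alpha^{-1}(T)$. Your explicit handling of the potential circularity (acyclicity/bipolarity of the output) matches the paper's own use of this fact in the proof of Lemma~\ref{lem:induc-fob-basis}.
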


Lastly, in the proof of the main result Theorem \ref{th:fob}, we will use the 
 following \emph{alternative characterization of the fully optimal spanning tree}, equivalent to Definition \ref{def:acyc-alpha} by \cite[Proposition 3]{GiLV05} or by \cite[Proposition 3.3]{AB1}.
Let $\G=(V,E)$ be an ordered directed graph, which is bipolar 
with respect to $p=\min(E)$.
Let $T=\alpha(\G)$.
With the convention that an edge has a positive sign in its fundamental cycle or cocycle w.r.t. a spanning tree, 
with $T=b_1<b_2<...<b_r$,
and with $E\setminus T=\{c_1<...<c_{n-r}\}$,
%
we have:
%
\vspace{-1mm}
\begin{itemize}
\itemsep=0mm
\partopsep=0mm 
\topsep=0mm 
\parsep=0mm
\item $b_1=p=\min(E)$;
\item
for every $1\leq i\leq r$, all elements of 
$\cup_{j\leq i} C^*(T;b_j)\setminus \cup_{j\leq i-1} C^*(T;b_j)$ are positive in $C^*(T;b_i)$; 
\item 
for every $1\leq i\leq n-r$, all elements of $\cup_{j\leq i} C(T;c_j)\setminus \cup_{j\leq i-1} C(T;c_j)$ are positive in $C(T;c_i)$ except $p=\min(E)$.
\end{itemize}
Equivalently, as formulated in \cite{AB1}, in terms of compositions of signed subsets (see Section \ref{subsec:prelim-tools}), the two latter properties can be formulated the following way: $C^*(T;b_1)\circ ...\circ C^*(T;b_r)$ is positive; and $C(T;c_1)\circ ...\circ C(T;c_{n-r})$ is positive except on $p$.
%


%


\emevder{rk (deja pris en compte nprmalement) : elemetns de T sont b dans prelim et dans fob dans preuve de algo final, mais  t au cours de algo final... rk: $t$ est deja polynome de Tutte dans abg2 mais pas ici, donc pas de pb}%


%
\vspace{-2mm}
\section{Recursive construction by deletion/contraction}
\label{sec:induction}

\vspace{-2mm}


This section investigates
a recursive deletion/contraction construction of the fully optimal spanning tree of an ordered bipolar digraph. 
See Section \ref{sec:intro} for an outline.
This construction
 is developed further in \cite[Section \ref{ABG2-sec:induction}]{ABG2} by giving deletion/contraction constructions involving all orientations and spanning trees%
\footnote{Note: Theorem \ref{thm:ind-10} is also stated in the companion paper \cite{ABG2}, which is also submitted. At the moment, we give its proof in both papers, including Lemma \ref{lem:induc-fob-basis}, but we should eventually remove this repetition and give the proof in only one of the two papers.}%
, and even more further in \cite{AB4} by generalizing such constructions in oriented matroids.
Let us mention that the construction, as it is formally stated below, extends directly to compute the fully otpimal basis of a bounded region of an  oriented matroid, as addressed in \cite{AB1}.



%
%
%


\begin{lemma}
\label{lem:induc-fob-basis}
Let $\G$ be a digraph,  on a linearly ordered set of edges $E$, which is bipolar w.r.t. $p=\min(E)$. Let $\n$ be the greatest element of $E$. Let $T=\alpha(\G)$. 
If $\n\in T$ then $\G/\n$ is bipolar w.r.t. $p$ and $T\setminus\{\n\}=\alpha(\G/\n)$.
If $\n\not\in T$ then $\G\backslash\n$ is bipolar w.r.t. $p$ and $T=\alpha(\G\backslash\n)$.
In particular, we get that $\G/\n$ is bipolar w.r.t. $p$ or $\G\backslash\n$ is bipolar w.r.t. $p$.
\end{lemma}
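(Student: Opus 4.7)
The plan is to split on whether $\n \in T$, and in each case to verify first that the claimed minor is bipolar with respect to $p$, then that the claimed restriction of $T$ satisfies the full-optimality criterion of Definition~\ref{def:acyc-alpha} in that minor. The identification with $\alpha$ of the minor will then follow from the uniqueness part of Theorem~\ref{thm:bij-10}, and the ``in particular'' statement is immediate from the two cases.

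Two observations drive the criterion verification. First, because $\n=\max(E)$, the element $\n$ is never the minimum of any cycle or cocycle of size at least two, so removing $\n$ from such a set does not alter its minimum. Second, a fundamental cocycle $C^*(T;b)$ contains no tree element other than $b$, and dually $C(T;e)$ contains no non-tree element other than $e$; hence when $\n\in T$ no $C^*(T;b)$ with $b\neq \n$ contains $\n$, and when $\n\not\in T$ no $C(T;e)$ with $e\neq \n$ contains $\n$. Consequently, each fundamental cycle or cocycle of the minor used by the criterion is either unchanged or obtained from the corresponding one in $\G$ by deleting $\n$, with minimum and remaining signs preserved.

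For $\n\in T$, bipolarity of $\G/\n$ is a general fact: contraction of an edge distinct from $p$ preserves bipolarity w.r.t.\ $p$. Indeed, $\G/\n$ is acyclic (a directed cycle in $\G/\n$ would lift either to a directed cycle in $\G$ or to a directed path in $\G$ closing via $\n$ into a directed cycle, both contradicting acyclicity of $\G$), and $-_p(\G/\n)=(-_p\G)/\n$ is strongly connected since contraction always preserves strong connectivity. The two observations above then translate the criterion for $T$ in $\G$ directly into the criterion for $T\setminus\{\n\}$ in $\G/\n$, so that $T\setminus\{\n\}=\alpha(\G/\n)$ by uniqueness.

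The case $\n\not\in T$, where bipolarity of $\G\backslash\n$ must also be established, is where I expect the main obstacle to lie, since deletion does not in general preserve bipolarity: $\G\backslash\n$ could acquire a new source or sink, namely the head $v$ or the tail $u$ of $\n$ if $\n$ was their unique in- or out-edge in $\G$. The strategy is to show by contradiction that this cannot happen when $\n\not\in T$. Assuming WLOG that $v$ becomes a new source, so that $\n$ is the unique in-edge of $v$ in $\G$, every tree edge of $T$ at $v$ must then be an out-edge of $v$ in $\G$; one analyses the fundamental cycle $C(T;\n)$, whose initial tree edge at $v$ therefore inherits positive sign, combined with the full-optimality sign condition at $\n$ (forcing $\min(C(T;\n))$ to be negative in $C(T;\n)$) and with the composition-characterisation of $\alpha(\G)$ recalled at the end of Section~\ref{subsec:prelim-fob}; the orthogonality between $C(T;\n)$ and the star cocycle of $v$, in which $\n$ is the lone in-edge, is then used to produce the contradiction. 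Once bipolarity of $\G\backslash\n$ is in hand, the two observations transfer the criterion for $T$ in $\G$ to the criterion for $T$ in $\G\backslash\n$, and uniqueness again concludes.
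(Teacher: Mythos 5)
Your transfer of the sign criterion to the minor is correct and matches the paper's own computation (fundamental cocycles unchanged under contraction of $\n\in T$, fundamental cycles unchanged under deletion of $\n\notin T$, the other family losing only $\n$, which, being $\max(E)$, affects neither minima nor the remaining signs). The genuine gap is in how you try to establish bipolarity of the minor, and it occurs in both cases. For the contraction case, the claim that contracting any edge distinct from $p$ preserves bipolarity is false: take vertices $s,a,b,t$ with edges $p=s\!\to\!t$, $s\!\to\!a$, $s\!\to\!b$, $a\!\to\!b$, $a\!\to\!t$, $b\!\to\!t$; this digraph is bipolar w.r.t.\ $p$, yet contracting $a\!\to\!t$ turns $a\!\to\!b$ and $b\!\to\!t$ into a directed $2$-cycle. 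Your lifting argument misses exactly this situation: a directed cycle of $\G/\n$ through the contracted vertex lifts to a directed path between the endpoints of $\n$, and when that path runs from the tail of $\n$ to its head it closes only by traversing $\n$ backwards, which is not a directed cycle of $\G$. So acyclicity of $\G/\n$ genuinely needs the hypothesis $\n\in\alpha(\G)$, which your argument never uses at that point. For the deletion case you only announce a strategy; note moreover that the plain orthogonality of $C(T;\n)$ with the star of the head $v$ of $\n$ is already satisfied by the unique tree edge of $C(T;\n)$ incident to $v$ (an out-edge of $v$, positive in both), so no contradiction falls out of that pairing alone, and the argument is not carried through.

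The paper avoids both difficulties by reversing your order of business: it first records that \emph{any} digraph in which some spanning tree satisfies the criterion of Definition~\ref{def:acyc-alpha} is automatically bipolar w.r.t.\ its smallest edge (the criterion forces the tree to be uniactive internal and determines the orientation up to global reversal via the single pass of Proposition~\ref{prop:alpha-10-inverse}, so the digraph lies in the inverse image of that tree under the bijection of Theorem~\ref{thm:bij-10}). Once the criterion has been transferred to $T\s\{\n\}$ in $\G/\n$ (resp.\ to $T$ in $\G\bk\n$) --- which is the part you did correctly --- bipolarity of the minor and the identification of the restricted tree with $\alpha$ of the minor both follow at once, with no direct digraph argument needed. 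You should restructure the proof around that fact rather than attempting to prove bipolarity of the minors independently of the full-optimality hypothesis.
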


\begin{proof}
First, let us recall that if a spanning tree of a directed graph satisfies the criterion of Definition \ref{def:acyc-alpha}, then this directed graph  is necessarily bipolar w.r.t.its smallest edge. 
This is implied by \cite[Propositions 2 and 3]{GiLV05}, or also stated explicitly in \cite[Proposition 3.2]{AB1},
and this is easy to see: if the criterion is satisfied, then 
the spanning tree is internal uniactive (by definitions of internal/external activities)
and the digraph is determined up to reversing all edges (see Proposition \ref{prop:alpha-10-inverse}\emevder{ATTENTION ptet pas dans ABG2 ???}), 
which implies that the digraph is in the inverse image  of $T$ by the uniactive bijection of Theorem \ref{thm:bij-10} and that it is bipolar w.r.t. its smallest edge.

Assume that $\n\in T$.
Obviously, the fundamental cocycle of $b\in T\s\{\n\}$ w.r.t. $T\s\{\n\}$ in $G/\n$ is the same as the fundamental cocycle of $b$ w.r.t. $T$ in $G$.
And the fundamental cycle of $e\not\in T$ w.r.t. $T\s\{\n\}$ in $G/\n$ is obtained by removing $\n$ from the fundamental cycle of $e$ w.r.t. $T$ in $G$.
Hence, those fundamental cycles and cocycles in $G/\n$ satisfy the criterion of Definition \ref{def:acyc-alpha}, hence $\G/\n$ is bipolar w.r.t. $p$ and $T\setminus\{\n\}=\alpha(\G/\n)$.

Similarly (dually in fact), assume that $\n\not\in T$.
%
%
The fundamental cocycle of $b\in T$ w.r.t. $T\s\{\n\}$ in $G\bk\n$ is obtained by removing $\n$ from the fundamental cocycle of $b$ w.r.t. $T$ in $G$.
And the fundamental cycle of $e\not\in T\s\{\n\}$ w.r.t. $T\s\{\n\}$ in $G\bk\n$ is the same as the fundamental cycle of $e$ w.r.t. $T$ in $G$.
Hence, those fundamental cycles and cocycles in $G\bk\n$ satisfy the criterion of Definition \ref{def:acyc-alpha}, hence $\G\bk\n$ is bipolar w.r.t. $p$ and $T\setminus\{\n\}=\alpha(\G\bk\n)$.

Note that the fact that either $\G/\n$ is bipolar w.r.t. $p$, or $\G\backslash\n$ is bipolar w.r.t. $p$ could also easily be directly proved in terms of digraph properties.
\end{proof}

\begin{thm}
\label{thm:ind-10}

Let $\G$ be a digraph,  on a linearly ordered set of edges $E$, which is bipolar w.r.t. $p=\min(E)$. 
The fully optimal spanning tree $\alpha(\G)$ of $\G$ satisfies 
the following inductive definition, 
where $\n=\max(E)$.%

\emevder{attention newpage dessous}%
\newpage

\begin{algorithme}


If $|E|=1$ then $\alpha(\G)=\n$.\par
If $|E|>1$  then:\par


\hskip 10mm If $\G/\n$ is bipolar w.r.t. $p$ but not $\G\backslash \n$  then
$\alpha(\G)=\alpha(\G/\n)\cup\{\n\}$.

\hskip 10mm If $\G\backslash\n$ is bipolar w.r.t. $p$ but not $\G/ \n$ then
$\alpha(\G)=\alpha(\G\backslash\n)$.

\hskip 10mm If both $\G\backslash\n$ and $\G/\n$ are bipolar w.r.t. $p$  then:


\hskip 20mm
let $T'=\alpha(\G\backslash\n)$, $C=C_\G(T';\n)$ and $e=\min(C)$\par

\hskip 20mm
if $e$ and $\n$ have opposite directions in $C$ then $\alpha(\G)=\alpha(\G\backslash\n)$;\par
\hskip 20mm
if $e$ and $\n$ have the same directions in $C$ then $\alpha(\G)=\alpha(\G/\n)\cup\{\n\}$.\par
\smallskip

{\sl or equivalently:}\par
\hskip 20mm
let $T''=\alpha(\G/\n)$, $D=C^*_\G(T''\cup\n;\n)$ and $e=\min(D)$\par

\hskip 20mm
if $e$ and $\n$ have opposite directions in $D$ then $\alpha(\G)=\alpha(\G/\n)\cup\{\n\}$;\par
\hskip 20mm
if $e$ and $\n$ have the same directions in $D$ then $\alpha(\G)=\alpha(\G\backslash\n)$.
\smallskip

\end{algorithme}
\end{thm}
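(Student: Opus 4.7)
The plan is to argue by induction on $|E|$, using Lemma \ref{lem:induc-fob-basis} as the driving tool. The base case $|E|=1$ is immediate: here $\n=p$, so $\{\n\}$ is the only spanning tree and the sign criterion is vacuous. For the inductive step, Lemma \ref{lem:induc-fob-basis} already provides that at least one of $\G/\n$ and $\G\bk\n$ is bipolar w.r.t. $p$, and identifies $T=\alpha(\G)$ with either $\alpha(\G/\n)\cup\{\n\}$ or $\alpha(\G\bk\n)$ according to whether $\n\in T$ or not. So the proof reduces to determining, in each of the three algorithmic cases, which of these two possibilities actually occurs.

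The first two cases follow from Lemma \ref{lem:induc-fob-basis} by contraposition: if only $\G/\n$ is bipolar w.r.t. $p$, then $\n\notin T$ is ruled out (otherwise the lemma would force $\G\bk\n$ to be bipolar), hence $\n\in T$ and $T=\alpha(\G/\n)\cup\{\n\}$; symmetrically when only $\G\bk\n$ is bipolar.

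The substantive step is the third case, in which both minors are bipolar. I would first handle the cycle formulation: set $T'=\alpha(\G\bk\n)$, $C=C_\G(T';\n)$, $e=\min(C)$, and observe that $T'$ is also a spanning tree of $G$. The key claim is that $T'$ already satisfies every instance of the sign criterion of Definition \ref{def:acyc-alpha} in $\G$ except possibly the one indexed by $\n\notin T'$. Indeed, for $b\in T'\s p$, the fundamental cocycle $C^*_G(T';b)$ either coincides with $C^*_{G\bk\n}(T';b)$ or equals it augmented by $\n$; since $\n=\max(E)$, the minimum of the cocycle is unchanged in either sub-case, so the sign condition on $b$ transfers directly from the fact that $T'=\alpha(\G\bk\n)$. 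Similarly, for $e'\in E\s(T'\cup\{\n\})$ the fundamental cycle $C_G(T';e')$ equals $C_{G\bk\n}(T';e')$ and its sign condition is inherited. Consequently, $T'$ satisfies the full optimality criterion in $\G$ if and only if $\n$ and $e$ have opposite directions in $C$, in which case $T=T'$. Otherwise, by uniqueness in Theorem \ref{thm:bij-10}, $T\neq T'$, forcing $\n\in T$, and Lemma \ref{lem:induc-fob-basis} yields $T=\alpha(\G/\n)\cup\{\n\}$.

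The dual ``equivalent'' formulation is handled by the mirror argument, starting from $T''=\alpha(\G/\n)$ and verifying that $T''\cup\{\n\}$ satisfies every instance of the sign criterion in $\G$ except possibly the one indexed by $\n\in T''\cup\{\n\}$; the relevant fundamental cocycle-sign transfer from $G/\n$ to $G$ is routine because contracting $\n$ leaves the vertex partition induced by removing any $b\in T''$ unchanged up to identification of the endpoints of $\n$. Again, $\n=\max(E)$ guarantees that the minimum reference element of every other fundamental cycle or cocycle is untouched, so the whole bookkeeping step, which is the main technical obstacle, remains elementary. The equivalence of the two subcases in Case~3 then requires no separate argument: both subroutines produce the unique $\alpha(\G)$.
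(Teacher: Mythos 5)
Your proposal is correct and follows essentially the same route as the paper's own proof: Lemma \ref{lem:induc-fob-basis} settles the cases where only one minor is bipolar, and in the case where both are bipolar one checks that $T'=\alpha(\G\backslash\n)$ already satisfies every instance of the criterion of Definition \ref{def:acyc-alpha} in $\G$ except possibly the one for $\n$ (because $\n=\max(E)$ leaves all the relevant minima and signs unchanged), then invokes uniqueness and the lemma to conclude, with the dual formulation handled symmetrically and their equivalence following for free since both compute the same map.
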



%

\begin{proof}
By Lemma \ref{lem:induc-fob-basis}, at least one minor among $\{\G/\n, \G\bk\n\}$ is bipolar w.r.t. $p$.
If exactly one minor among $\{\G/\n, \G\bk\n\}$ is bipolar w.r.t. $p$, then
by Lemma \ref{lem:induc-fob-basis} again, 
the above definition  is implied.  Assume now that both minors are bipolar w.r.t. $p$.

Consider $T'=\alpha(\G\bk\n)$. 
Fundamental cocycles of elements in $T'$ w.r.t. $T'$ in $\G$ are obtained by removing $\n$ from those in $\G\bk\n$. Hence they satisfy the criterion from Definition \ref{def:acyc-alpha}.
Fundamental cycles of elements in $E\s (T'\cup\{\n\})$ w.r.t. $T'$ in $\G$ are the same as in $\G\bk\n$.  Hence they satisfy the criterion from Definition \ref{def:acyc-alpha}.
Let $C$ be the fundamental cycle of $\n$ w.r.t. $T'$. 
If $e$ and $\n$ have opposite directions in $C$,
then $C$ satisfies the criterion from Definition \ref{def:acyc-alpha}, and
 $\alpha(\G)=T'$.
Otherwise, we have $\alpha(\G)\not=T'$, and, by Lemma \ref{lem:induc-fob-basis}, we must have $\alpha(\G)=\alpha(\G/\n)\cup\{\n\}$.

The second condition involving $T''=\alpha(\G/\n)$ is proved in the same manner. Since it yields the same mapping $\alpha$, then this second condition is actually equivalent to the first one, and so it can be used as an alternative. Note that the fact that these two conditions are equivalent is difficult and proved here in an indirect way (actually this fact is equivalent to the key result that $\alpha$ yields a bijection), see Remark \ref{rk:ind-10-equivalence}. 
\end{proof}

\begin{cor}
\label{cor:ind-10}
Using notations of Theorem \ref{thm:ind-10}, if $-_\n\G$ is bipolar w.r.t. $p$ then the above algorithm of Theorem \ref{thm:ind-10} builds at the same time $\alpha(\G)$ and $\alpha(-_\n\G)$, 
we have:
$$\Bigl\{\ \alpha(\G),\ \alpha(-_\n\G)\ \Bigr\}\ =\ \Bigl\{\ \alpha(\G\backslash\n),\ \alpha(\G/\n)\cup\{\n\}\ \Bigr\}.$$
Also, we have that $-_\n\G$ is bipolar w.r.t. $p$ if and only if $\G\backslash\n$ and $\G/\n$ are bipolar w.r.t. $p$.
\end{cor}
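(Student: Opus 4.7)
The plan is to prove the equivalence in the second statement first, and then deduce the set equality from Theorem \ref{thm:ind-10}. The crucial observation at the outset is that deletion and contraction ignore the direction of the removed edge, so $(-_\n\G)\backslash\n=\G\backslash\n$ and $(-_\n\G)/\n=\G/\n$ as ordered digraphs. Moreover, since $p\neq\n$, the orientation of $p$ is the same in $\G$ and $-_\n\G$, so whenever both are bipolar w.r.t. $p$ the bijectivity in Theorem \ref{thm:bij-10} forces $\alpha(\G)\neq\alpha(-_\n\G)$ (otherwise one would have $\G=-_\n\G$).

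For the ``only if'' direction of the equivalence, I would apply Lemma \ref{lem:induc-fob-basis} simultaneously to $\G$ and $-_\n\G$. If $\n$ lay in both $\alpha(\G)$ and $\alpha(-_\n\G)$, the lemma would give $\alpha(\G)\setminus\{\n\}=\alpha(\G/\n)=\alpha((-_\n\G)/\n)=\alpha(-_\n\G)\setminus\{\n\}$, contradicting $\alpha(\G)\neq\alpha(-_\n\G)$; the case in which $\n$ lies outside both is symmetric. Hence $\n$ belongs to exactly one of the two fully optimal trees, which by Lemma \ref{lem:induc-fob-basis} forces both $\G/\n$ and $\G\backslash\n$ to be bipolar w.r.t. $p$.

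For the ``if'' direction, I would argue directly in graph-theoretic terms. Write $\n:u\to v$ in $\G$, with source $s$ and sink $t$ (the endpoints of $p$). The relations $v\neq s$ and $u\neq t$ are immediate from $\G$ being bipolar. Next, $u\neq s$: otherwise, because $\G\backslash\n$ is bipolar, there is a directed path from $s$ to $v$ in $\G\backslash\n$, which becomes a nontrivial closed directed walk in $\G/\n$ after merging $u=s$ with $v$, contradicting acyclicity of $\G/\n$; dually $v\neq t$. Any directed cycle in $-_\n\G$ must use the reversed edge $\n$, giving a directed path $u\to v$ in $\G\backslash\n$ and hence again a cycle in $\G/\n$; so $-_\n\G$ is acyclic. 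Finally, inspecting incoming edges vertex by vertex and using $u\neq s$ together with $v\neq s$, one sees that the sources of $-_\n\G$ form the set $(\text{sources of }\G\backslash\n)\setminus\{u\}=\{s\}$, and dually the unique sink is $t$; thus $-_\n\G$ is bipolar w.r.t. $p$.

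With the equivalence established, the set equality follows immediately: under its hypothesis, Theorem \ref{thm:ind-10} forces each of $\alpha(\G)$ and $\alpha(-_\n\G)$ to lie in $\{\alpha(\G\backslash\n),\alpha(\G/\n)\cup\{\n\}\}$, and since these two trees are distinct by the opening observation, they must together exhaust this set. The main obstacle is the graph-theoretic verification in the ``if'' direction, where tracking the sources and sinks of $-_\n\G$ under a single edge reversal relies on the nontrivial fact that the endpoints of $\n$ must differ from $s$ and $t$, which itself requires the simultaneous bipolarity of both minors.
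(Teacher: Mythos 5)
Your proof is correct, and for the set equality and the ``only if'' half of the equivalence it follows exactly the route the paper intends (the paper's own proof is the one-line ``Direct by Theorem \ref{thm:ind-10} and Theorem \ref{thm:bij-10}''): you use that $\G\backslash\n$ and $\G/\n$ do not see the direction of $\n$, that injectivity of $\alpha$ on bipolar orientations with fixed orientation of $p$ forces $\alpha(\G)\not=\alpha(-_\n\G)$, and Lemma \ref{lem:induc-fob-basis} applied to both $\G$ and $-_\n\G$ to locate $\n$ in exactly one of the two images. Where you genuinely diverge is the ``if'' direction: instead of deducing it from the bijection (e.g.\ by arguing that both candidate trees $\alpha(\G\backslash\n)$ and $\alpha(\G/\n)\cup\{\n\}$ are uniactive internal in $G$ and invoking surjectivity, which is what ``direct by the bijection property'' suggests and which the authors also hint at in the closing remark of the proof of Lemma \ref{lem:induc-fob-basis}), you give a self-contained digraph argument: you show the head and tail of $\n$ are distinct from both the source $s$ and the sink $t$ (using acyclicity of $\G/\n$ via the closed-walk argument), deduce acyclicity of $-_\n\G$ (any directed cycle would have to use the reversed $\n$ and would yield a cycle in $\G/\n$), and then track sources and sinks under the single edge reversal. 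This buys a more elementary and verifiable argument at the cost of some case analysis; the paper's route buys brevity but leans on the (already established, nontrivial) bijection. The only caveat, which the paper shares, is the degenerate case $|E|=1$ (where $\n=p$ and the minors are not meaningful); your argument implicitly assumes $|E|>1$, which is the only case of interest.
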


\begin{proof}
Direct by Theorem \ref{thm:ind-10} and Theorem \ref{thm:bij-10} (bijection property).
\end{proof}


\begin{remark}[computational complexity]
\label{rk:difficult}
 \rm
The algorithm of Theorem \ref{thm:ind-10} is exponential time, as it may involve an exponential number of minors. Indeed, in general, one needs to compute both $\alpha(\G\backslash\n)$ and  $\alpha(\G/\n)$ in order to compute $\alpha(\G)$
(because one might compute  $T'=\alpha(\G\backslash\n)$ and finally set 
$\alpha(\G)=\alpha(\G/\n)\cup\{\n\}$, or equivalently one might compute $T''=\alpha(\G/\n)$ and finally set $\alpha(\G)=\alpha(\G\backslash\n)$).
And hence, in general, one may need to compute $\alpha(\G\backslash\n\backslash\n')$, $\alpha(\G\backslash\n/\n')$, $\alpha(\G/\n\backslash\n')$ and $\alpha(\G/\n/\n')$, with $\n'=\max(E\backslash \{\n\})$, and so on...
Finally, with $|E|=n$, the number of calls to the algorithm to build $\alpha(\G)$ is $O(2^0+2^1+\ldots +2^{n-1})$, that is $O(2^n)$.
%
In contrast, the algorithm provided in Section \ref{sec:fob} involves a linear number of minors (and yields a polynomial time algorithm, see Corollary \ref{cor:complex-alpha}). 
However, the algorithm of Theorem \ref{thm:ind-10} is efficient 
in terms of computational complexity 
for building the images of all bipolar orientations of $G$ at once,  see Remark \ref{rk:ind-10}.
\end{remark}

%

\begin{remark}[building the whole bijection at once]
\label{rk:ind-10}
 \rm
By Corollary \ref{cor:ind-10}, 
 the construction of Theorem \ref{thm:ind-10} can be used to build the whole active bijection for $G$ (i.e. the $1-1$ correspondence between all bipolar orientations of $G$ w.r.t. $p$ with fixed orientation, and all spanning trees of $G$ with internal activity $1$ and external activity $0$), from the whole active bijections for $G/\n$ and $G\bk\n$.
For each pair of bipolar orientations $\{\G, -_\n\G\}$, the algorithm  provides which ``choice'' is right to associate one orientation with the orientation induced in $G/\n$ and the other  with the orientation induced in $G\s\n$.
We mention that this ``choice'' notion is developed further in
\cite[Section \ref{ABG2-sec:induction}]{ABG2} 
(and \cite{AB4}) as the basic component for a deletion/contraction framework.
This ability to build the whole bijection at once is interesting from a structural viewpoint, but also from a computational complexity viewpoint. 
Precisely, with $|E|=n$, the number of calls to the algorithm to build one image is $O(2^n)$ (see Remark \ref{rk:difficult}), but the number of calls to the algorithm to build the $O(2^n)$ images of all bipolar orientations is  $O(2^n\times 2^0+ 2^{n-1}\times 2^1+\ldots+2^0\times 2^n+ 2^1\times 2^{n-1})$, that is just $O(n.2^n)$.
\emevder{a decreire ent ermes de "enumeration compelxity", voir courriels Daniel}%
\end{remark}

\begin{remark}[linear programming]
\label{rk:ind-lp}
 \rm
As the full optimality notion strengthens real linear programming optimality, the deletion/contraction algorithm of Theorem \ref{thm:ind-10} corresponds to a refinement of the classical linear programming solving by constraint/variable  deletion, see \cite{GiLV04, AB3} (see also Remark \ref{rk:lp}).
\end{remark}

\begin{remark}[equivalence in Theorem \ref{thm:ind-10}]%
\emevder{attention this Remark \ref{rk:ind-10-equivalence} is also repeated in the companion paper \cite{ABG2}.}
\label{rk:ind-10-equivalence}
 \rm
The equivalence of the two formulations in the algorithm of Theorem \ref{thm:ind-10} is a deep result,  difficult to prove if no property of the computed mapping is known.
Here, to prove it, we  implicitly use that $\alpha$ is already well-defined by Definition \ref{def:acyc-alpha}, 
and bijective for bipolar orientations (Key Theorem \ref{thm:bij-10}).
But if one defines a mapping $\alpha$ from scratch as in the algorithm (with either one of the two formulations) and then investigates its properties, then it turns out that
the above equivalence result is 
equivalent to the  existence and uniqueness of the fully optimal spanning tree as defined in Definition \ref{def:acyc-alpha}. 
See \cite{AB4} for precisions.
%
This equivalence result is also related to the active duality property  (see \cite[Section \ref{ABG2-subsec:fob}]{ABG2}, see also \cite[Section 5]{AB1} or \cite[Section 5]{AB2-b}\emevder{verifier ref ection AB2b}).
Roughly, in terms of graphs, from \cite[Section 4]{GiLV05}, one defines a bijection between orientations obtained from bipolar orientations by reversing 
the source-sink edge and spanning trees with internal activity $0$ and external activity~$1$.
The full optimality criterion of Definition \ref{def:acyc-alpha} can be directly adapted to these orientations with almost no change (see \cite[Section \ref{ABG2-subsec:fob}]{ABG2}).
Then, thanks to the equivalence of the two dual formulations in the above algorithm, one can directly adapt the above algorithm for this second bijection, with no risk of inconsistency.
In an oriented matroid setting, this equivalence result also means that the same algorithm can be equally used in the dual, with no risk of inconsistency.
These subtleties are detailed in \cite{AB4}.
\emevder{TOUE CETTE RK A BIEN VERIFIER, ET A VOIR AVEC - ET REPRENDRE DANS - AB4 !!!!}
\emevder{cette remark a mettre dans ABG2 ou dans ABG2LP ?????}
\end{remark}



\section{Direct computation by optimization}
\label{sec:fob}

This section gives a direct and efficient construction of the fully optimal spanning tree of an ordered bipolar digraph, in terms of an optimization based on a linear ordering of cocycles in a minor for each edge of the resulting spanning tree. 
It is completely independent of Section \ref{sec:induction}.
It is an adaptation for graphs\emevder{ai enleve a simlification car AB3 est sipple, encore lus !} of a general construction
by elaborations on linear programming given for oriented matroids in \cite{AB3} (see Section~\ref{sec:intro} for an outline, see also \cite{GiLV09} for a short statement in real hyperplane arrangements,  note that this section could be directly generalized to regular matroids or totally unimodular matrices, and see Remark \ref{rk:lp} for more information on these linear programming aspects).
By this way, the computation of the fully optimal spanning tree can be made in polynomial time (Corollary \ref{cor:complex-alpha}).
%
Also, we insist that we do not give here a new proof of the key Theorem \ref{thm:bij-10}:
we  use this 
result to assume that the fully optimal spanning tree exists, and then prove that our algorithm necessarily builds it.

\ss





\begin{lemma}
\label{lem:ordering-validity}
Let $G$ be a graph. Given  a spanning tree $T$ of $G$ and $U\subseteq T$, there exists at most one cocycle $C$ of $G$ such that $C\cap T=U$.
Given  a spanning tree $T$ of $G$ and two cocycles  $C$ and $D$ of $G$, there exists $f\in T$ belonging to $C\Delta D$.
\end{lemma}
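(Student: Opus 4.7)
The plan is to dispatch both parts via one key observation: for a spanning tree $T$, the fundamental cocycles $\bigl\{C^*(T;t)\bigr\}_{t\in T}$ form a basis of the cocycle space of $G$ over $\mathbb{F}_2$, and each satisfies $C^*(T;t)\cap T=\{t\}$. Consequently the linear map $C\mapsto C\cap T$ from the cocycle space to $\mathbb{F}_2^T$ is an isomorphism, so $C\cap T$ determines $C$ uniquely in the cocycle space, and \emph{a fortiori} as a (minimal) cocycle. This already gives the first assertion.

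More in the spirit of the oriented-matroid language used elsewhere in the paper, I would actually write the first assertion as follows. Suppose $C$ and $C'$ are two cocycles with $C\cap T=C'\cap T$. Their symmetric difference lies in the cocycle space (which is closed under $\Delta$) and is disjoint from $T$, hence contained in $E\setminus T$. If it were nonempty, it would contain a cocycle $D$ with $D\cap T=\emptyset$; but every cocycle of a connected graph meets every spanning tree (a cocycle is a minimal edge cut defining a bipartition of $V$, and any spanning tree must cross this bipartition), a contradiction. Hence $C=C'$.

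For the second assertion---implicitly for two \emph{distinct} cocycles, since otherwise $C\Delta D=\emptyset$ and no such $f$ can exist---I would simply apply Part~1. Distinctness forces $C\cap T\neq D\cap T$, and from the identity $(C\Delta D)\cap T=(C\cap T)\Delta(D\cap T)$ the right-hand side is nonempty, producing the required $f\in T\cap(C\Delta D)$. The main (mild) obstacle is rhetorical rather than mathematical: flagging the implicit distinctness in Part~2, and either citing or proving in one line the small auxiliary fact that every cocycle of a connected graph meets every spanning tree.
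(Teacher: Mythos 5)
Your proof is correct, and both of your variants for the first assertion differ from the paper's argument. The paper reconstructs the cocycle directly from the vertex side: a cocycle $C$ with $C\cap T=U$ corresponds to a bipartition $(A,B)$ of $V$ with $G[A]$, $G[B]$ connected; each component of $T\setminus U$ lies entirely in $A$ or in $B$, and the edges of $U$ cross, so $(A,B)$ --- hence $C$ --- is determined by $T$ and $U$. You instead work in the cocycle space over $\mathbb{F}_2$: the fundamental cocycles $C^*(T;t)$ form a basis with $C^*(T;t)\cap T=\{t\}$, so $C\mapsto C\cap T$ is an isomorphism onto $\mathbb{F}_2^T$ (your second variant, via ``a nonempty member of the cocycle space disjoint from $T$ would contain a cocycle missing $T$,'' is the same idea phrased without counting dimensions). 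Both are sound; the paper's is self-contained and elementary but uses vertices (which the paper otherwise tries to avoid), while yours stays at the level of edge sets and generalizes verbatim to binary (hence regular) matroids, which is in line with the paper's remark that this section extends to regular matroids. For the second assertion the two arguments coincide up to contraposition: the paper assumes $T\cap(C\Delta D)=\emptyset$, deduces $T\cap C=T\cap D$, and invokes uniqueness; you note that distinctness gives $C\cap T\neq D\cap T$ and hence $(C\Delta D)\cap T\neq\emptyset$. Your observation that distinctness of $C$ and $D$ is implicitly required is accurate --- the paper's statement omits it but its proof (and its use in Definition \ref{def:optimizable-digraph}) relies on it.
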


\begin{proof}
Let us prove the first claim. Assume such a cocycle $C$ exists.
Then it is defined by a partition of the set of vertices of $G$ into two sets $A$ and $B$, such that $G[A]$ and $G[B]$ are connected (since $C$ is minimal for inclusion).
Each connected component of $T\setminus U$ is contained either in $G[A]$ or in $G[B]$.
On the other hand, if an edge $e=(x,y)$ is in $U$ then it is in $C$ and has an extremity in $G[A]$ and the other in $G[B]$. Hence, as $T$ is spanning, the partition into the sets $A$ and $B$ is completely determined by $T$ and $U$, which implies the uniqueness of $C$.
Now, let us prove the second claim.
Assume $T\cap (C\Delta D)=\emptyset$, then we have $T\cap C\subseteq D$ and $T\cap D\subseteq C$, so
$T\cap C=T\cap D= T\cap C\cap D$, which is a contradiction with the first claim for $U=T\cap C=T\cap D$.
\end{proof}

\begin{definition}
\label{def:optimizable-digraph}
\rm
We call \emph{optimizable digraph} a (connected) 
digraph $\G=(V,E\cup F)$, 
with possibly $E\cap F\not=\emptyset$,
given with:

\noindent\bul an edge $p\in E\setminus F$, called \emph{infinity edge},

\noindent\bul a subset of edges $E$, called \emph{ground set}, such that the digraph $\G(E)$ is acyclic,

\noindent\bul a subset of edges $F$, called \emph{objective set},  linearly ordered, such that $F\cup\{p\}$ is a spanning tree~of~$G$.%



\ms


Given an optimizable digraph $\G$ as defined above, 
we define a \emph{linear  ordering for the signed cocycles of $\G$ containing $p$} as follows.
Let $C$ and $D$ be two signed cocycles (see Section \ref{subsec:prelim-tools}) of $\G$ containing $p$. 
By Lemma \ref{lem:ordering-validity}, since $F\cup\{p\}$ is a spanning tree of $G$, there exists an element of $F$ that belongs to $C \Delta D$.
Let $f$ be the smallest element of $F$ such that either $f$ belongs to $C \Delta D$, or $f$ belongs to $C\cap D$ and has opposite signs in $C$ and $D$.
\emevder{j'y comprend plus rien, comment c'est possible que f ait signe differents dans C et D ? comme C et D contiennt p, c'est possible seulement si f et p coupent la ligne C-D au meme point, ce qui n'a pas d'interet...???!! voire preuve de complexite proposition plus loin, on ne se sert que de $f\in C\triangle D$... A VERIFIER !!!!! (ptet un pb de signe de p?)}%
If $f$ is positive in $C$ or negative in $D$ then set $C>D$.
If $f$ is negative in $C$ or positive in $D$ then set $D>C$.
%

Equivalently, we set $C>D$ if $f$ is a positive element of $C$ and not an element of $D$, or $f$ is a positive element of $C$ and a negative element of $D$, or $f$ is not an element of $C$ and a negative element of $D$, where $f$ is the smallest possible element of $F$ that allows for setting $C<D$ or $D<C$ by this way.

Equivalently, it is easy to see that this ordering is provided by the weight function $w$ on signed cocycles of $G$
defined as follows. 
Denote $F=f_2<...<f_r$, and denote $f_i$ resp. $\bar f_i$ the element with a positive resp. negative sign. 
For $1\leq i\leq r$, set
$w(f_i)=2^{r-i}$ and $w(\bar f_i)=-2^{r-i}$, and set $w(e)=0$ if $e\in E\setminus F$. Then define the weight $w(C)$ of a signed cocycle $C$ as the sum  of weights of its elements. The above linear ordering is 
given by:
$C>D$ if $w(C)>w(D)$.

\eme{ai change ordre dans def ci-dessus, et maximalite de optimal, et une occurence de > dans preuve, noramelment aps de probleme}

%
\ms

We define \emph{the optimal cocycle} of $\G$ as the 
maximal
signed cocycle of $\G$ containing $p$ with positive sign, and inducing a directed cocycle of $\G(E)$. It exists since $\G(E)$ is acyclic, and it is unique since the ordering is linear.

\end{definition}

\begin{thm}
\label{th:fob}
Let $\G=(V,E)$ be an ordered bipolar digraph with respect to $p=\min(E)$.
The fully optimal spanning tree $\alpha(\G)=p<t_2<...<t_r$ is computed by the following algorithm.
\medskip


\begin{algorithme}
\parindent= 1cm
\noindent(1) Initialize $\G$ as the optimizable digraph 
given by:

\hskip 2cm
\vbox{\hsize=13cm
\noindent\bul 
the infinity edge $p$ 

\noindent\bul 
the ground set $E$

\noindent\bul 
the objective set $F=f_2<...<f_r$ where $p<f_2<...<f_r$ is the smallest lexicographic spanning tree of $\G$ (and the linear ordering on $F$ is induced by the linear ordering on $E$).

}
\medskip

\noindent(2) For $i$ from 2 to $r$ do:

(2.1) Let $\C$ be the optimal cocycle of $\G$.

\noindent{\small \it Scholia: $\C$ is actually the cocycle induced in the current digraph $\G$ by the fundamental cocycle of the element $t_{i-1}$ (computed at the previous step, $t_1=p$) of the fully optimal spanning tree of the initial digraph.}%
\emevder{peut-tre faire un vrai enonce pour ca, attention c'est dit pluieur fois, dans exemple aussi et dns preuve de observation a la fin aussi... et dans preuve du thm propriete P2... c'est important}%
\emevder{+++ en fait serait ptet bien d'observer avant  que cnnaiter bse interne = connaitre fundamental cocycles}%
\ss

(2.2) Let $$t_i=\min (E\setminus \C)$$

\noindent{\small \it Scholia: the $i$-th edge $t_i$ of $\alpha(\G)$ is actually the smallest edge not contained in fundamental cocycles of smaller edges of $\alpha(\G)$ because the fully optimal spanning tree $\alpha(\G)$ is internal uniactive.}%
\ss

(2.3) Let $$p'=t_i$$

(2.4) Let $$E'=E\setminus \C$$

(2.5) %
\vtop{\hsize=13cm
\noindent If $p'\in F$ then let $F'=F\setminus\{p'\}$, and if $p'\not\in F$ then let: }
$$ F'= F\setminus \max \bigl(\ F\cap  C_{G} (F \cup \{p\};p')\ \bigr).$$

\hskip 1cm%
\vtop{\hsize=14cm
\noindent 
Equivalently,
$F'$ is obtained  by removing from $F$ 
the greatest possible element such that the following property holds: 

\it $p'\not\in F'$ and $F'\cup\{p'\}$ is a spanning tree of the minor $\G'$ defined below. }
\ss

%

(2.6) Set
$$\G'= \G\ (\ E'\ \cup\  F'\ \cup\ \{p\}\ )\ /\ \{p\}$$

\hskip 1cm
as the optimizable digraph 
given by:

\hskip 2cm
\vbox{\hsize=12cm

\noindent\bul 
the infinity edge $p'$ 

\noindent\bul 
the ground set $E'$

\noindent\bul 
the objective set $F'$

}

(2.7) Update $\G:=\G'; \ p:=p'; \ E:=E'; \ F:=F'.$
\medskip

\noindent(3) Output $$\alpha(\G)=p<t_2<...<t_r.$$
\end{algorithme}
\end{thm}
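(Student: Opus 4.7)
The plan is to prove the theorem by induction on the iteration index $i=2,\dots,r$, establishing simultaneously the two assertions recorded in the scholia: at step $i$ the optimal cocycle $\C$ equals the cocycle induced in the current optimizable digraph by $C^*_G(\alpha(\G);b_{i-1})$ (where $\alpha(\G)=b_1<\dots<b_r$ denotes the target answer, known to exist by Theorem~\ref{thm:bij-10}), and consequently $t_i=b_i$. The invariants carried along the induction are that the current infinity edge is $b_{i-1}$, the current ground set is what remains of $E$ after deleting, in successive steps, the non-infinity elements of $C^*_G(\alpha(\G);b_1),\dots,C^*_G(\alpha(\G);b_{i-2})$, and that $F\cup\{p\}$ is a spanning tree of the current minor whose elements weight the ordering of cocycles.

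First, I would isolate a purely combinatorial lemma that explains step (2.2): for any uniactive internal spanning tree $T=b_1<\dots<b_r$ of an ordered graph one has $b_i=\min\bigl(E\setminus \bigcup_{j<i}C^*(T;b_j)\bigr)$ for $i\ge 2$. The inclusion $b_i\in E\setminus\bigcup_{j<i}C^*(T;b_j)$ is immediate since a tree element belongs only to its own fundamental cocycle; the minimality uses external uniactivity: if $e<b_i$ with $e\notin T$, external activity $0$ gives some $t\in C(T;e)\cap T$ with $t<e<b_i$, hence $t=b_j$ for some $j<i$, and by the classical $e\in C^*(T;b_j)\iff b_j\in C(T;e)$ duality we conclude $e\in C^*(T;b_j)$. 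Combined with the inductive identification of $\C$, this immediately gives $t_i=b_i$.

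Second, and this is the heart of the proof, I would establish that the optimal cocycle of the current optimizable digraph is the one induced by $C^*_G(\alpha(\G);b_{i-1})$. By the alternative characterization recalled at the end of Section~\ref{subsec:prelim-fob}, the composition $C^*(T;b_1)\circ\cdots\circ C^*(T;b_{i-1})$ of signed fundamental cocycles of $T=\alpha(\G)$ is positive, so $C^*(T;b_{i-1})$ is positive on its new elements, and its induced cocycle in the current minor is a signed cocycle containing the current infinity edge positively and supported on a directed cocycle of the ground set; hence it is a legitimate candidate for the maximum. To prove it is the maximum, let $D$ be any competing candidate and consider the smallest element $f$ of $F$ belonging to their symmetric difference or at which they have opposite signs (Lemma~\ref{lem:ordering-validity} ensures $f$ exists). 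Using the sign-preserving elimination tool from Section~\ref{subsec:prelim-tools} between a suitable signed representative of $C^*(T;b_{i-1})$ and $D$, together with the orthogonality between cycles and cocycles applied to the fundamental cycle $C(T;f)$, I would produce a signed cycle or cocycle of $\G$ that violates one of the signs prescribed by the full optimality of $T$ (Definition~\ref{def:acyc-alpha}), a contradiction.

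Third, I would verify that step (2.6) produces a valid optimizable digraph preserving the inductive invariants. Acyclicity of $\G'(E')$: deleting the non-infinity part of a directed cocycle from an acyclic $\G(E)$ yields again acyclicity, since any directed cycle in $\G'(E')$ would extend to a directed cycle of $\G(E)$ via the deleted directed cocycle only by crossing it twice in opposite directions, which is impossible for a directed cocycle. The spanning-tree property of $F'\cup\{p'\}$ is ensured by the choice of which element of $F$ is removed in step (2.5), namely the maximal element of $F\cap C_G(F\cup\{p\};p')$ (with the convention that if $p'\in F$ then $p'$ itself is removed), guaranteeing both that $p'\notin F'$ and that $F'\cup\{p'\}$ spans. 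Finally, the induced fundamental cocycles $C^*_G(T;b_j)$ for $j\ge i$ restrict coherently to fundamental cocycles of $T\cap\G'$ in $\G'$ relative to the new infinity $b_{i-1}$ (itself contracted at the next step), so the alternative characterization is inherited.

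The main obstacle will be the second step: controlling the ordering of cocycles against a competitor $D$ via sign-preserving elimination, where the exponentially weighted objective function $w(f_k)=2^{r-k}$ reduces the comparison to the smallest discriminating index in $F$, yet the contradiction with full optimality must be drawn in the original digraph $\G$, not in the current minor. This requires careful bookkeeping of how signs lift from $\G'$ back to $\G$ through the intermediate contractions and deletions performed in previous iterations.
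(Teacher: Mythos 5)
Your plan follows the same overall strategy as the paper's proof: an induction carrying the two scholia as formal properties (the optimal cocycle of the current minor equals the induced fundamental cocycle $C_{k}$ of $T=\alpha(\G)$, whence $t_i=b_i$ via $b_i=\min\bigl(E\setminus\bigcup_{j<i}C^*(T;b_j)\bigr)$), plus a well-definedness check for step (2.6). But at the heart of the argument --- showing that the induced cocycle is the maximum --- two concrete ingredients are missing, and without them the contradiction does not go through. First, after eliminating between the competitor and $-C_k$ preserving the discriminating element $f$, and lifting the resulting cocycle $D'$ of the minor to a cocycle $D$ of the original graph, you need $\min(D)=f$, not merely that $f$ is the smallest element of $F$ in $D$. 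This requires a separate invariant (the paper's Lemma \ref{lem:algo-fob-invariant}): at every stage, the smallest element of any cocycle of the current minor lies in $F\cup\{p\}$ \emph{and} coincides with the smallest element of the cocycle of the original graph inducing it. This is not a consequence of $F\cup\{p\}$ being a spanning tree (Lemma \ref{lem:ordering-validity} only produces \emph{some} tree element in a symmetric difference); it is proved by its own induction, using orthogonality between cocycles and the cycle $C_G(F\cup\{p\};p')$ whose maximum is the element removed from $F$ at step (2.5). You flag the ``bookkeeping'' as the main obstacle but do not supply this idea, and it is exactly what closes the final step: orthogonality yields $e\in C\cap D$ with $e\le f$ and $e$ negative in $D$, and only $\min(D)=f$ forces $e=f$ and the sign contradiction.

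Second, the orthogonality must be taken against the \emph{composition} $C(T;c_1)\circ\cdots\circ C(T;c_j)$ of fundamental cycles, which is positive except on $p$ by the alternative characterization recalled at the end of Section \ref{subsec:prelim-fob}, not against the single fundamental cycle $C(T;f)$: Definition \ref{def:acyc-alpha} controls only the sign of $\min(C(T;f))$ relative to $f$, which is too weak to contradict the negativity in $D$ of the element $e$ produced by orthogonality (one needs every element of the relevant cycle support, other than $p$, to be positive). Moreover $f$ need not lie outside $T$ --- the objective set $F$ evolves from the lexicographically smallest spanning tree and is unrelated to $T$ --- so $C(T;f)$ may not even be defined; the paper splits into the cases $f\notin T$ (take $c_j=f$) and $f\in T$ (take $c_j=a=\min(C^*(T;f))$, noting $a\notin T$ because $T$ has internal activity $1$). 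Your sketch as written covers only the first case. The remaining points of your plan (the formula for $b_i$ from external activity $0$, the positivity of $C_k$ on the current ground set, acyclicity and the spanning-tree property after step (2.6)) match the paper's Lemma \ref{lem:algo-fob-well-defined} and the properties (P1$_i$), (P2$_i$) of its proof.
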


\vspace{-10mm}

\begin{figure}[H]
	\centering
\includegraphics[scale=0.42]{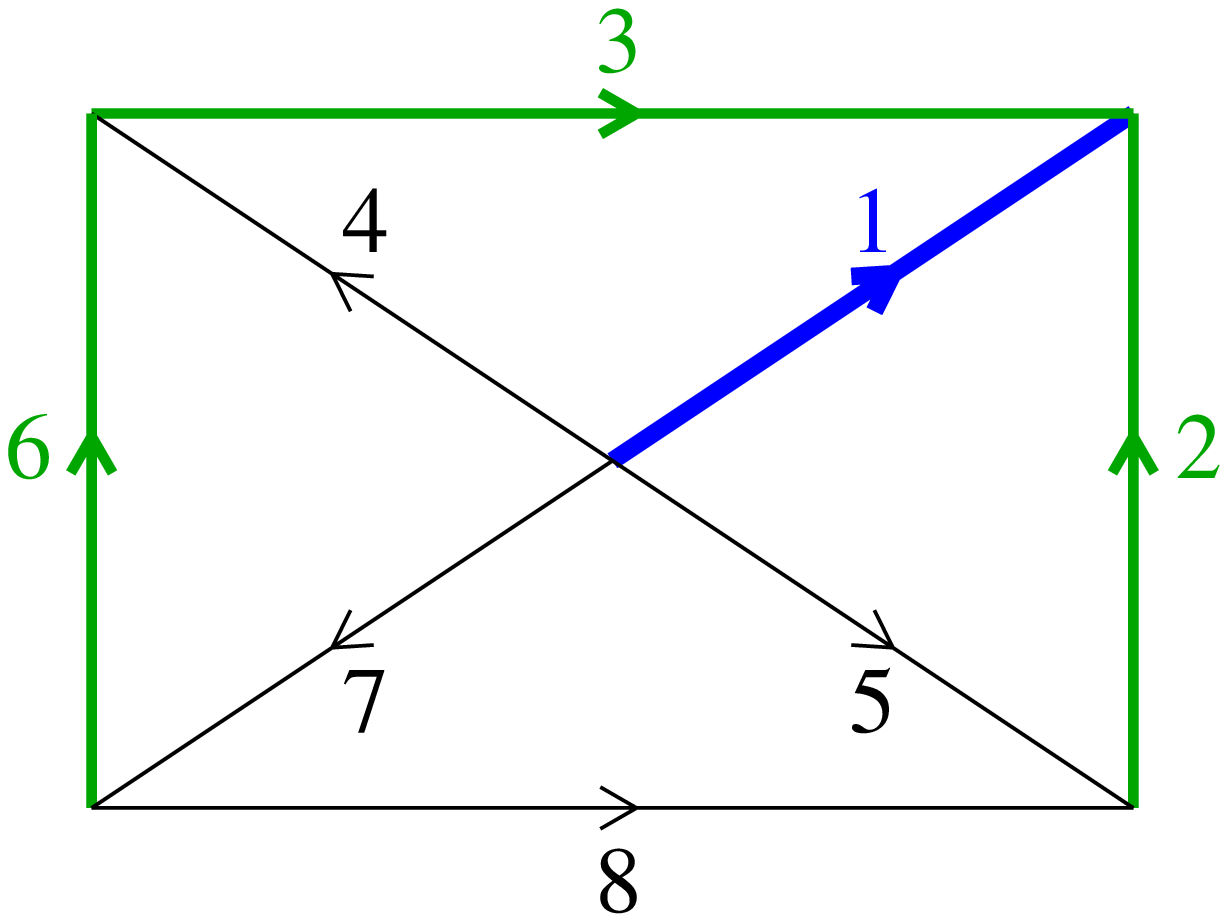}
\hfill
			\includegraphics[scale=0.42]{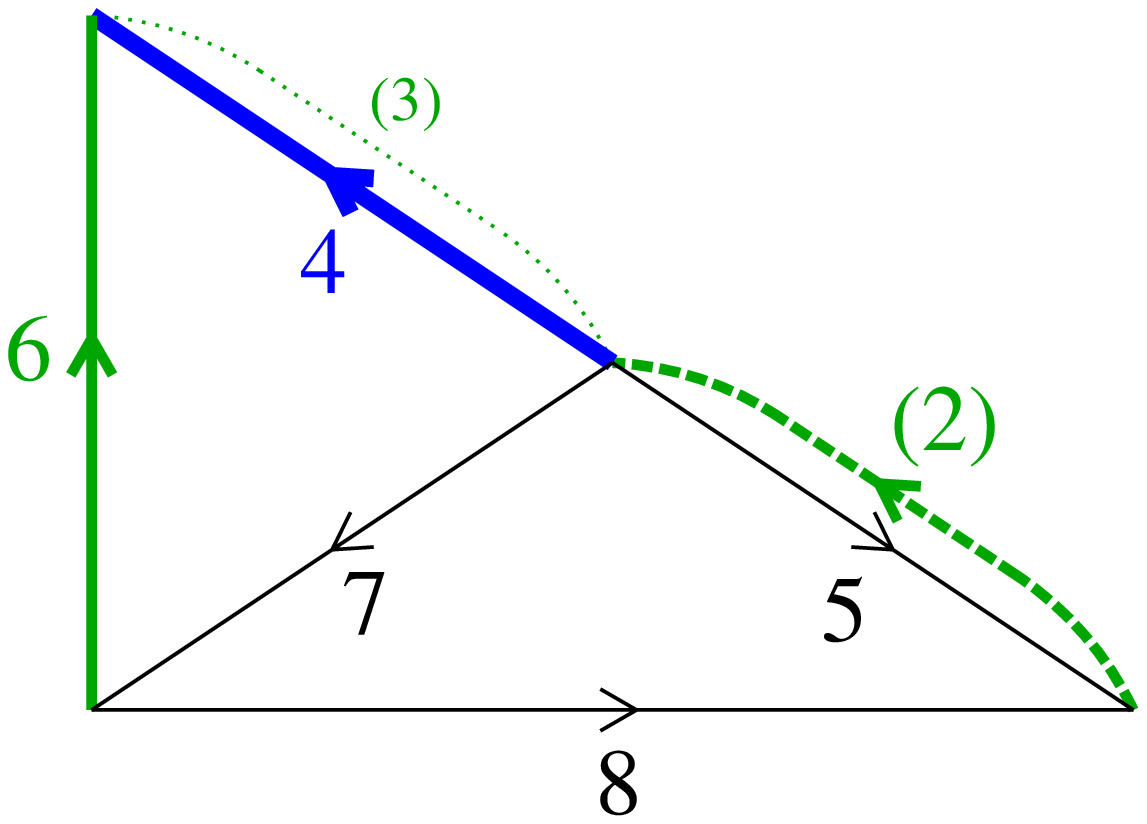}	
		\hfill
		\includegraphics[scale=0.42]{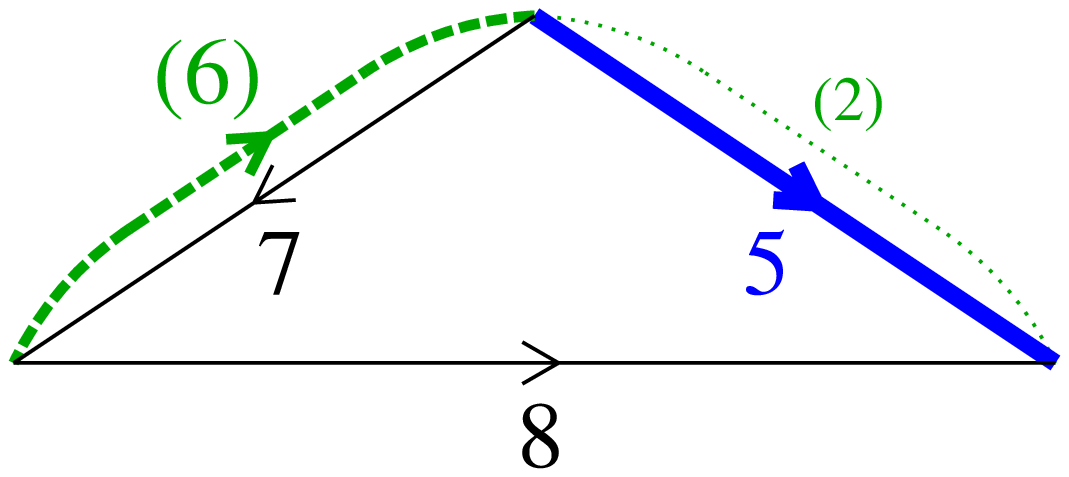}
		
		\caption[]{illustration for Example \ref{example}. The digraph $\G=\G_1$ is on the left, the digraph $\G_2=\G_1/1\backslash 3$ in the middle, and the digraph $\G_3=\G_2/4\backslash 2$ on the right. At each step: the bolder edge is $p$, the other bold edges form the set $F$, the directed edges within parenthesis are in $F$ but no more in $E$, and the nearly invisible dashed edge is not part of the graph since it has been deleted from the previous $F$.}
		\label{fig:fob-algo}
\end{figure}

\vspace{-3mm}

\begin{figure}[H]
	\centering
		\includegraphics[scale=0.4]{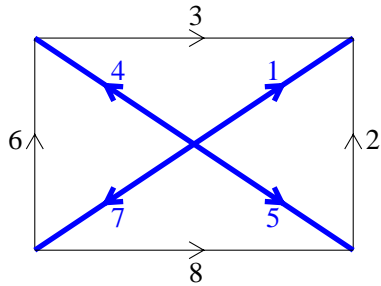}
		\caption[]{the fully optimal spanning tree  $\alpha(\G)=1457$ of the digraph $\G$ of Figure \ref{fig:fob-algo} (the same as in \cite{GiLV05}).}
		\label{fig:fob-output}
\end{figure}

\vspace{-3mm}

\begin{example}
\label{example}
\rm
Theorem \ref{th:fob} might seem rather technical in a pure graph setting, so let us first illustrate it on an example before proving it  (see Remark \ref{rk:lp} or \cite{GiLV09, AB3} for geometrical interpretations).
Let us apply the algorithm of Theorem \ref{th:fob} to the same example as in \cite{GiLV05}, where it illustrated the inverse algorithm. The steps of the algorithm are depicted on Figure \ref{fig:fob-algo}. The output is depicted on Figure~\ref{fig:fob-output}.
One can keep in mind that the successive optimal cocycles built in the algorithm correspond to the 
fundamental cocycles with respect to the successive edges of the fully optimal spanning tree (cf. scholia above, see details in property (P2$_i$) in the proof of Theorem \ref{th:fob} below).\emevder{deja dit dans scholia !!!}%
\smallskip

{
\parindent=1cm

\noindent {\it --- Computation of $t_2$.}
Initially $\G=\G_1$ is a digraph with set of edges $E= \{1<2<3<4<5<6<7<8\}$.
The  minimal spanning tree is $\{1<2<3<6\}$.
Hence $p = 1$ and 
$F = 236$.
%
%
The linearly ordered directed cocycles of $\G_1$ containing $p=1$ are:
%
$123\ > \ 1246\ > \ 1358\ > \ 14568\ > \ 1457$.
%
The maximal is $\C= 123$ (which is equal, at this first step, to the smallest for the lexicographic ordering, see Observation \ref{obs:first-cocycle} below).
%
%
%
We get $t_2=4$.
\smallskip


\noindent {\it --- Computation of $t_3$.}
%
We now consider the optimizable digraph $\G_2=\G_1 / \{1\} \backslash \{3\}$
%
given with
%
$p = 4$,
$E = 45678$,
and
$F = 26$ 
(the edge $3$ is deleted as the greatest belonging to the circuit $134$ and to the previous $F=236$).
%
The linearly ordered signed cocycles of $\G_2$ positive of $E$ and containing $p=4$ are:
%
$46\ >\ \overline{2}457 $
%
(where the bar upon elements represents negative elements).
%
%
\noindent The maximal is $\C=46$.
We get $t_3=5$.

\smallskip

\noindent {\it --- Computation of $t_4$.}
We now consider the  optimizable digraph  $\G_3 = \G_2 / \{4\} \backslash \{ 2\}$
given with
$p=5$,
$E=578$,
and
$F=6$
%
(the edge $2$ is deleted as the greatest belonging to the circuit $25$ and to the previous $F=26$).
The linearly ordered signed cocycles of $\G_3$ positive of $E$ and containing $p=5$ are:
%
$58\ >\ 5\overline{6}7$.
%
%
%
The maximal is $\C=58$.
%
We get $t_4=7$.
\smallskip

\noindent {\it --- Output.}
We get finally $\alpha(\G)=1457$
(and one can check that the fundamental cocycles $C^*(1457;1)=123$, $C^*(1457;4)=\overline{3}46$, and $C^*(1457;5)=\overline{2}58$ induce the successive optimal cocycles $123$, $46$ and $58$ in the successive considered minors).
}
\end{example}

\begin{paragraph}{\textit{\textbf{Notations for what follows}}}
The proof of Theorem \ref{th:fob} is given below, after two lemmas. 
In all this framework, we will use the following notations.
We denote $\G$, $\G'$, etc., the variables as they are used during the algorithm, meaning they are considered at any given step of the algorithm with their current value.
We denote $\G_1$ the initial  optimizable digraph $\G$, and $\G_i$ the variable  optimizable digraph $\G$ updated at step (2.7) w.r.t. variable $i$, with parameters $p_i=t_i$ as $p$, $E_i$ as $E$ and $F_i$ as $F$, for all
$1\leq i\leq r$.
\end{paragraph}

\begin{lemma}
\label{lem:algo-fob-well-defined}
The algorithm of Theorem \ref{th:fob} is well-defined.
\end{lemma}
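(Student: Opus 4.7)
Plan: The argument proceeds by induction on the iteration index $i$, maintaining the invariant that at the start of each iteration the current digraph $\G$ is an optimizable digraph with $\G(E)$ bipolar with respect to $p$ (so that in particular $p$ lies in a directed cocycle of $\G(E)$, which is what gives existence of the optimal cocycle). For the base case $i=2$, this is immediate from the initialization: $\G(E)=\G_1$ is bipolar by hypothesis, $p=\min(E)$, and $F\cup\{p\}$ is the minimum spanning tree of $\G_1$ by construction, so the three requirements of Definition \ref{def:optimizable-digraph} are satisfied.

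For the inductive step, I verify each operation in order. Step (2.1) is well-defined because the invariant furnishes a non-empty set of candidate signed cocycles and the linear order is total, so a maximum exists and is unique. Step (2.2) is well-defined because at any iteration $i \leq r$ the bipolar digraph $\G(E)$ has at least three vertices (starting from $|V|=r+1$ and decreasing by one per iteration), and a directed cocycle of a bipolar digraph on $\geq 3$ vertices cannot exhaust $E$: removing it leaves two non-empty connected pieces, at least one of which has $\geq 2$ vertices and therefore contains an internal edge. Step (2.5) is well-defined: the case $p'\in F$ is direct, and when $p'\notin F$ the fundamental cycle $C_G(F\cup\{p\};p')\subseteq F\cup\{p,p'\}$ must intersect $F$. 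Indeed $p'$ cannot be a loop (edges of $E$ remain non-loops throughout the algorithm: parallel edges of any contracted infinity edge belong to its cocycle and are removed with it), and $p$ and $p'$ cannot be parallel (parallel edges share every cocycle, contradicting $p\in\C$ and $p'\notin\C$); hence the cycle has length $\geq 3$ and the required maximum exists.

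Step (2.6) produces a valid optimizable digraph $\G'$: clearly $p'\in E'\setminus F'$ by construction; the digraph $\G'(E')=\G(E'\cup\{p\})/p$ is acyclic since restriction preserves acyclicity and contracting an edge in a DAG cannot create a directed cycle (no reverse directed path between its endpoints); and $F'\cup\{p'\}$ is a spanning tree of $G'$ by case analysis. When $p'\in F$, one has $F'\cup\{p'\}=F$, which spans $G'$ since $F\cup\{p\}$ spans $G$ and we then contract $p$. When $p'\notin F$, the fundamental-cycle exchange on $F\cup\{p\}$ yields $(F\cup\{p,p'\})\setminus\{m\}$ as a spanning tree of $G$, which survives the restriction to $E'\cup F'\cup\{p\}$ and, after contracting $p$, delivers $F'\cup\{p'\}$ as a spanning tree of $G'$. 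The main obstacle, and the structural heart of the argument, is to verify that the invariant survives: that $\G'(E')$ is again bipolar with respect to $p'$. This requires analyzing how the vertex partition induced by $\C$ interacts with the contraction of $p$, and using the minimality of $p'=\min(E\setminus\C)$ together with the maximality of $\C$ in the cocycle ordering to identify the endpoints of $p'$ as the new source and sink of $\G'(E')$. This is where the specific combinatorial content of the algorithm is genuinely required.
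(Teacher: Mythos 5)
There is a genuine gap, and it is one you yourself flag: you build the whole induction around the invariant that $\G(E)$ is \emph{bipolar} with respect to $p$, you correctly observe that preserving this invariant across step (2.6) is ``the structural heart of the argument'' --- and then you do not prove it. The final paragraph only describes what such a proof would have to do, so the proposal does not establish the lemma. Moreover, the obstacle is self-inflicted. Definition \ref{def:optimizable-digraph} only requires the ground-set restriction $\G(E)$ to be \emph{acyclic}, and the existence of the optimal cocycle is derived from acyclicity (plus connectivity) alone: an edge of a connected acyclic digraph lies in a directed cocycle, which lifts to a signed cocycle of $\G$ containing $p$ positively. The paper's proof accordingly maintains only acyclicity, and that propagates trivially: since $\C$ is a cocycle containing $p$ and $E'=E\s\C$, we have $\C\cap(E'\cup\{p\})=\{p\}$, so $p$ is an isthmus of $\G(E'\cup\{p\})$, whence $\G'(E')=\G(E'\cup\{p\})/\{p\}$ is connected and acyclic. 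This also exposes a small error in your argument for acyclicity: contracting an edge $(u,v)$ of an acyclic digraph \emph{can} create a directed cycle, namely whenever there is another directed path from $u$ to $v$ (two parallel arcs already give a counterexample), so ``no reverse directed path between its endpoints'' is not a sufficient justification; the correct reason here is the isthmus property just stated.

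Apart from this, several of your verifications are sound and in places more explicit than the paper's own proof: the non-emptiness of $E\s\C$ in step (2.2) via the two connected sides of the induced directed cocycle of $\G(E)$, the observation that $p'$ is neither a loop nor parallel to $p$ (so the maximum in step (2.5) is over a non-empty set), and the exchange argument showing that $F'\cup\{p'\}$ is a spanning tree of $\G'$ all hold and match or refine what the paper does. But the proof cannot stand until the bipolarity invariant is either established or, better, replaced by the acyclicity invariant that Definition \ref{def:optimizable-digraph} actually asks for.
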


\begin{proof}
%
Initially, the optimizable digraph $\G=\G_1$  is obviously well defined.
One has to check that the optimizable digraph $\G'$ defined at each call to step (2.6) is well defined.
%
First, by induction hypothesis, $\G$ is connected, and $\C$ is a cocycle of $\G$, that is an inclusion-minimal cut. We have $p\in \C$ by definition of $\C$, and we have $E'=E\setminus \C$ by step (2.4), hence $p$ is the unique edge joining the two connected components of $\G(E'\cup \{p\})$.
Hence $\G(E'\cup \{p\})/\{p\}$ is connected, and so is $\G'$.

By definitions at steps (2.2)(2.3)(2.4), we have $p'=\min(E')$, and by definition of $F'$ at step (2.5), we have $p'\not\in F$, hence $p'\in E'\setminus F'$, as required for an optimizable digraph.
Since $F\cup\{p\}$ is a spanning tree of $\G$, then $F$ is a spanning tree of $\G'$, and, by definition of $F'$ at step (2.5),  $F'\cup\{p'\}$ is a spanning tree of $\G'$, as required for an optimizable digraph.
By assumption, at each call to step (2.6), the digraph $\G(E)$  is acyclic.
Since $\C$ is a cocycle of $\G$ with $p\in \C$ as above, then $\G(E'\cup \{p\})$ is also acyclic, and so is $\G'(E')$, as required for an optimizable digraph.
\end{proof}


\begin{lemma}
\label{lem:algo-fob-invariant}
%
At any step of the algorithm of Theorem \ref{th:fob}, 
the following invariant is maintained:
the smallest element of a cocycle of $G$ belongs to $F\cup\{p\}$, and this element is equal to the smallest element of the cocycle of $G_1$ inducing this cocycle (in the minor $G$ of $G_1$, see Section \ref{subsec:prelim-tools}).
\end{lemma}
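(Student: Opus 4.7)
The plan is to proceed by induction on the iteration index $i$, using the notation $\G_i$, $p_i$, $E_i$, $F_i$ fixed just before the lemma statement. For the base case $i=1$, I would invoke the classical greedy property of the lexicographically smallest spanning tree of $G_1$, which by construction is $F_1\cup\{p_1\}$: if some cocycle $D$ of $G_1$ had $e=\min(D)$ outside this tree, then Kruskal's algorithm would have rejected $e$ on creating a fundamental cycle of edges all strictly smaller than $e$, and cycle-cocycle orthogonality with $D$ would produce another element of this cycle lying in $D$ and strictly smaller than $e$, contradicting minimality. The second part of the invariant is trivial at $i=1$ since the inducing cocycle in $G_1$ of a cocycle of $G_1$ is itself.

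For the inductive step, I would take any cocycle $D^{(i+1)}$ of $G_{i+1}$ and, via the induced-cocycle property recalled in Section~\ref{subsec:prelim-tools}, obtain the unique cocycle $D^{(i)}$ of $G_i$ with $p_i\notin D^{(i)}$ and $D^{(i+1)}=D^{(i)}\setminus A_i$, where $A_i$ denotes the set of edges of $G_i$ deleted (not contracted) in the transition $G_i\to G_{i+1}$. A short uniqueness check then shows that the cocycle $D^{(1)}$ of $G_1$ inducing $D^{(i)}$ also induces $D^{(i+1)}$. Applying the induction hypothesis to $D^{(i)}$ yields $\min(D^{(i)})\in F_i$ (since $p_i\notin D^{(i)}$) and $\min(D^{(i)})=\min(D^{(1)})$. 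Writing $g_i$ for the element of $F_i$ removed at step~(2.5), I would split into cases: if $\min(D^{(i)})\neq g_i$, then $\min(D^{(i)})\in F_{i+1}$, which is contained in the edges of $G_{i+1}$, so $\min(D^{(i)})\notin A_i$ and $\min(D^{(i+1)})=\min(D^{(i)})$ satisfies both parts of the invariant; and if $\min(D^{(i)})=g_i$ with $p_{i+1}\in F_i$, then $g_i=p_{i+1}$ is the new infinity edge and is still an edge of $G_{i+1}$, so $\min(D^{(i+1)})=p_{i+1}\in\{p_{i+1}\}\subseteq F_{i+1}\cup\{p_{i+1}\}$ again satisfies the invariant.

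The main technical obstacle is the remaining subcase $\min(D^{(i)})=g_i$ with $p_{i+1}\notin F_i$, which I would rule out as impossible. Applying cycle-cocycle orthogonality to the fundamental cycle $C=C_{G_i}(F_i\cup\{p_i\};p_{i+1})$ and the cocycle $D^{(i)}$ (using $g_i\in F_i\cap C\cap D^{(i)}$) yields a second element $e\in C\cap D^{(i)}$ with $e\neq g_i$; since $C\subseteq F_i\cup\{p_i,p_{i+1}\}$, and $e=p_i$ is excluded by $p_i\notin D^{(i)}$ while $e\in F_i$ is excluded because $g_i=\max(F_i\cap C)$ together with $\min(D^{(i)})=g_i$ would force the contradictory chain $e<g_i$ and $e\geq g_i$, one is forced to $e=p_{i+1}\in D^{(i)}$. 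The contradiction is then extracted from the maximality of the optimal cocycle $\C_i$ of $\G_i$: invoking the specialized cocycle elimination property stated at the end of Section~\ref{subsec:prelim-tools}, I would eliminate between $\C_i$ and a suitable signing of $D^{(i)}$ preserving $p_i$, producing a signed cocycle $\widetilde{\C}$ of $\G_i$ that still contains $p_i$ with positive sign and positive $E_i$-part but whose weight in the linear ordering defining the optimal cocycle strictly exceeds $w(\C_i)$, contradicting the maximality of $\C_i$. The hardest step of the plan is to choose the sign of $D^{(i)}$ so that the resulting $\widetilde{\C}$ remains a valid candidate for the optimal (positive $E_i$-part) while the positive contribution of $p_{i+1}$ strictly dominates any loss --- this uses crucially that $g_i=\max(F_i\cap C)$, so that the weight increment outweighs any loss on elements strictly smaller than $g_i$ in the $F_i$-ordering.
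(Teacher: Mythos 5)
Your base case and the two easy subcases of the inductive step follow the paper's proof. The divergence is in the last subcase, $\min(D^{(i)})=g_i$ with $p_{i+1}\notin F_i$, and this is exactly where your plan breaks down. The paper disposes of this subcase by asserting that $g_i$ is the greatest element of the fundamental cycle $C$, so that every $f'\in C\cap D^{(i)}$ satisfies $f'\le g_i\le f'$, forcing $C\cap D^{(i)}=\{g_i\}$ and a contradiction with cycle--cocycle orthogonality; in other words the paper never lets $p_{i+1}$ enter $D^{(i)}$. You instead (correctly, as far as the orthogonality bookkeeping goes) conclude $p_{i+1}\in D^{(i)}$ and then try to contradict the maximality of the optimal cocycle of $\G_i$. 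That last step cannot work. First, in this subcase $p_{i+1}\notin F_i$, so by Definition \ref{def:optimizable-digraph} its weight is $0$: there is no ``positive contribution of $p_{i+1}$'' to dominate anything. Second, the weights $w(f_j)=\pm 2^{r-j}$ make \emph{smaller} elements of $F$ dominate larger ones, so a gain at $g_i=\max(F_i\cap C)$ cannot outweigh a loss on any element of $F_i$ smaller than $g_i$ --- your claimed domination runs in the wrong direction. Third, $D^{(i)}$ is an arbitrary cocycle of $G_i$ whose two signings are imposed by the digraph; after eliminating with the optimal cocycle preserving $p_i$ there is no reason the resulting cocycle is positive on $E_i$, hence no reason it is even a competitor in the ordering.

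More decisively, the configuration you are trying to rule out is realizable, so no argument can show it is vacuous. Take $\G_1=K_4$ on vertices $a,b,c,d$ with $1=ab$, $2=ac$, $3=ad$, $4=bc$, $5=bd$, $6=cd$, oriented bipolarly with source $a$ and sink $b$. Then $F_1=\{2,3\}$, the optimal cocycle is $\{1,2,3\}$, $p_2=4$, $C=C_{G_1}(\{1,2,3\};4)=\{1,2,4\}$ and $g_1=2$. The cocycle $D^{(1)}=\{2,3,4,5\}$ (the cut separating $\{a,b\}$ from $\{c,d\}$) avoids $1$, has $\min(D^{(1)})=2=g_1$, and contains $p_2=4$: this is precisely your ``impossible'' subcase, occurring in a legitimate run of the algorithm. (It induces the cocycle $\{3,4,5\}$ of $G_2$, whose minimum is $3$, so the first half of the invariant survives here for a reason your case analysis does not supply, while the minimum has changed from $2$ to $3$.) So the key step of your plan --- deriving a contradiction in this subcase --- is a genuine gap, not a detail to be filled in; you would need to handle the possibility $p_{i+1}\in D^{(i)}$, $\min(D^{(i)})=g_i$ directly and show what $\min(D^{(i+1)})$ then is, rather than exclude it.
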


\begin{proof}
The property is true at the initial step since $F\cup\{p\}=F_1\cup\{p_1\}$ is defined as the smallest spanning tree of $G=G_1$.
Assume it is true for $G$, we want it true also for $G'$ as defined at step (2.6).
Let $D'$ be a cocycle of $G'$. By construction of $G'=G\backslash A/\{p\}$ for some set $A$ such that $G'$ is connected, there exists a cocycle $D$ of $G$ inducing $D'$, such that $p\not\in D$ and $D\setminus A=D'$ (see Section \ref{subsec:prelim-tools}). 
Since $F\cap A=F\setminus (E'\cup F' \cup \{p\})$ by definition of $G'$, and $F\setminus F'$ contains exactly one element $f$ by definition of $F'$ at step (2.5), then $F\cap A$ contains at most this element $f$.
%
By induction hypothesis, we have $\min(D)\in F$. By definition of $D$, we have $D\setminus A=D'$.
%
Assume for a contradiction that $\min(D')\not= \min(D)$.
Then
$\min(D)\in A\cap F$, implying $F\cap A=\{f\}$ and $\min(D)=f$.
There are two cases for defining $f$ at step (2.5).
In the first case, we have $f=p'=\min(E')\in F$, which implies $f\in E'$ and which contradicts $\{f\}=F\cap A=F\setminus (E'\cup F' \cup \{p\})$. 
In the second case, $f$ is defined as the greatest element of the unique cycle $C$ of $G$ contained in $F\cup\{p,p'\}$. 
In this case, assume an element $f'$ belongs to $D\cap C$. 
We have $f'\leq f$ since $f'\in C$ and the greatest element of $C$ if $f$. 
And we have $f'\geq f$ since $f'\in D$ and $\min(D)=f$. 
Hence $f'=f$, and we have proved $D\cap C=\{f\}$,
 which contradicts the orthogonality of the cycle $C$ and the cocycle $D$ (see Section \ref{subsec:prelim-tools}).

So we have $\min(D')= \min(D)$.
Since the cocycle $D_1$ of $G_1$ inducing $D$ in $G$ also induces $D'$ in $G'$, and since $\min(D)=\min(D_1)$ by induction hypothesis, we get $\min(D')=\min(D_1)$.

Finally, if $\min(D')\not\in F'$, since $\min(D')=\min(D)\in F$, then $\min(D')=f$. As above, there are two cases for defining $f$ at step (2.5).
In the first case, we have $f=p'=\min(E')$, hence $\min(D')=f\in F'\cup\{p'\}$, which is the property that we have to prove.
In the second case, we have $\min(D)=f$ and the same argument as above leads again to  $C\cap D=\{f\}$ and to the same contradiction.
 The invariant is now proved.
 %
 %
\end{proof}


\emevder{IMPORTANT attention : voir ou est-ce qu'on utilise que G bipolar au depart !!!! en fait c'est utilis? par le fait qu'il existe $T$ avec ses proprietes !!! vu vite fait avant soumission, mais a revoir apres redaction de AB3 !!! cf. phrase au debut dans la preuve --- OK C'EST BON JE CROIS}

\begin{proof}[Proof of Theorem \ref{th:fob}] 
The present proof is a condensed but complete version for graphs
of the 
general geometrical proof that will be given in \cite{AB3}, taking benefit of the linear ordering of cocycles defined above (which is possible in graphs but not in general, see Remark \ref{rk:lp}).
%
We will extensively use the notion of cocycle induced in a minor of a graph by a cocycle of this graph, see  Section \ref{subsec:prelim-tools}.
%
%
%
Since $\G_1$ is bipolar, its fully optimal spanning tree $\alpha(\G_1)$ 
exists
and satisfies the properties recalled in Section \ref{subsec:prelim-fob}. 
By Lemma \ref{lem:algo-fob-well-defined}, the algorithm  given in the theorem statement is well defined.
Now, we have to prove that $\alpha(\G_1)$ is necessarily equal to the output of this algorithm. 
%
%
The proof consists in proving by induction that, for every $2\leq i\leq r$, the two following properties (P1$_i$) and (P2$_i$) are true. The property (P1$_i$) for $2\leq i\leq r$ means that the algorithm actually returns $\alpha(\G_1)$.
The property (P2$_i$) for $2\leq i\leq r$ means that the optimal cocycles computed in the successive minors are actually induced by the fundamental cocycles of the fully optimal spanning tree $\alpha(\G_1)$, as noted in the first scholia in the algorithm statement. Then, the proof is not long but rather technical. 
Let us denote $T=\alpha(\G_1)=\{b_1<\ldots<b_r\}$.
\begin{itemize}
\partopsep=0mm \topsep=0mm \parsep=0mm \itemsep=0mm
\item 
\noindent (P1$_i$)  {\it We have $b_i=t_i$, where $t_i$ is defined at step (2.2) of the algorithm 
and $b_i$ is the $i$-th element of $T=\alpha(\G_1)$.}
\item
\noindent (P2$_i$) {\it The optimal cocycle $\C$ of $\G_{i-1}$, defined at step (2.1) of the algorithm 
equals the cocycle denoted $C_{i-1}$ of $\G_{i-1}$ induced by the fundamental cocycle $C^*(T;b_{i-1})$ of $p_{i-1}=t_{i-1}=b_{i-1}$ w.r.t. $T$ in~$\G_1$,
that is: $C_{i-1}= C^*(T;b_{i-1}) \cap (E_{i-1}\cup F_{i-1})$, where $(E_{i-1}\cup F_{i-1})$ is the edge set of $\G_{i-1}$.%
}
\end{itemize}

\eme{bien relire cette preuve}%
First, observe that $C_{i-1}$ is a well defined induced cocycle in property (P2$_i$) as soon as (P1$_j$) is true for all $j<i-1$ (implying $t_j=b_j$), since $b_j\not\in C^*(T;b_{i-1})$ for $j<i-1$ and $\G_{i-1}=\G_1/\{p_1,t_2,...,t_{i-2}\}\setminus A$ for some $A$.

Second, let $1\leq k\leq r$.
Assume that, the property (P2$_i$) is true for all $2\leq i\leq k$
and the property (P1$_i$) is true for all $2\leq i\leq k-1$.
Then we directly have that the property (P1$_i$) is also true for $i=k$. 
Indeed, as shown in \cite[Proposition 2]{GiLV05} (that can be proved easily), the fact that the spanning tree $T=b_1<b_2<...<b_r$ 
 has  external activity $0$
  implies that,  for all $1\leq i\leq r$, we have $b_i=\min \bigl(E\setminus \cup_{j\leq i-1} C^*(T;b_j)\bigr)$.
So we have
$b_{k}=\min(E_{k-1} \setminus C_{k-1})=\dots=\min(E_1\setminus \cup_{j\leq k-1} C_j)=\min(E_1\setminus \cup_{j\leq k-1} C^*(T;b_j))$, and so we have $t_{k}=b_{k}$.

Now, it remains to prove that, under the above assumption, the property (P2$_i$) is true for $i=k+1$.
%
Consider $\C$, denoting the optimal cocycle of $\G_k$, and $C_k$, denoting the cocycle  of $\G_{k}$ induced by the fundamental cocycle of $p_k=t_{k}=b_{k}$ w.r.t. $T$ in~$\G_1$. 
Assume for a contradiction that $\C\not=C_k$. 
\smallskip

Since properties (P1$_i$) and (P2$_i$) are true for all $2\leq i\leq k$ by assumption, and reformulating definition of $\G'$ at step (2.6), we have: 
\begin{eqnarray*}
\G_k&=&\G_{k-1}/\{b_{k-1}\}\setminus \Bigl(C_{k-1}\setminus (F_k \cup \{b_{k-1}\})\Bigr)\\
&=&\G_{k-1}/\{b_{k-1}\}\setminus \Bigl(C^*(T;b_{k-1})\setminus (F_k \cup \{b_{k-1}\})\Bigr)
\end{eqnarray*}
and hence, inductively, we have:
$$\G_k=\G_1/\{b_1,...,b_{k-1}\}\backslash A$$ where $A$ is the union of all fundamental cocycles of $b_i$ w.r.t. $T$ for $1\leq i\leq k-1$ minus $F_k \cup \{b_1,...,b_{k-1}\}$. That is:
$$A=\Bigl(\ \bigcup_{1\leq i\leq k-1}C^*(T;b_i)\ \Bigr) \ \ \setminus\ \  \Bigl(\ F_k \cup \{b_1,...,b_{k-1}\}\ \Bigr).$$
In particular, we have $C_k=C^*(T;b_k)\setminus A$.
And we also have $A\cap T=\emptyset$. 

As recalled in Section \ref{subsec:prelim-fob},
the definition of $T$ implies that $C_k$ is positive on $E_k$, that is positive except maybe on elements of $F_k\setminus E_k$, just the same as $\C$.
By assumption and property (P1), we have $b_k=t_k=\min(E_k)$.
Then, by definition of $C_k= C^*(T;b_{k}) \cap (E_{k}\cup F_{k})$, we have $b_k\in C_k$. 
Hence, the cocycle $C_k$ has been taken into account in the linear ordering of cocycles of the optimizable digraph $\G_k$ defining $\C$. So $\C>C_k$ in this linear ordering by definition of $\C$. 
\smallskip

By definition of this linear ordering, let $f$ be the smallest element of $F_k$ with the property of being positive in $\C$ and not belonging to $C_k$,
or positive in $\C$ and negative in $C_k$,
or not belonging to $\C$ and negative in $C_k$.
The edge $f$ does not have opposite signs in $\C$ and $-C_k$.
So let $D'$ be a cocycle of $\G_k$ obtained by elimination between $\C$ and $-C_k$ preserving $f$ (see Section \ref{subsec:prelim-tools}).
Necessarily, $f$ is positive in $D'$ by definition of $f$.
By definitions of $\C$ and $C_k$, the element $b_k$ is positive in $\C$ and in $C_k$. 
Moreover, every edge in $F$ smaller than $f$ belonging to $\C$ resp. $C_k$ also belongs to $C_k$ resp. $\C$ with the same sign, by definition of $f$.
Hence, by properties of elimination, the cocycle $D'$ does not contain $b_k$ nor any element of $F$ smaller than $f$ belonging to $\C$ or $C_k$.
Since the smallest edge in $D'$ belongs to $F\cup\{b_k\}$, as shown by the invariant of Lemma \ref{lem:algo-fob-invariant}, and since we have $b_k\not\in D'$, then we have $\min(D')=f$.
The negative elements of $D'$ are either elements of $F_k\setminus E_k$, or elements of $C_k\setminus \{b_k\}$.
In the first case,  since $E_k=E_1\setminus A$, the negative elements belong to $F_k\cap A\subseteq A\subseteq E_1\setminus T$.
In the second case, the negative elements also belong to $E_1\setminus T$
 by definition of a fundamental cocycle.
Finally, let $D$ be the cocycle of $\G_1$ inducing $D'$ in $\G_k$,
such that $D\cap \{b_1,...,b_k\}=\emptyset$ and $D\setminus A=D'$.
The negative elements of $D$ belong to $A$ or are negative in $D'$, then, in every case, they belong to $E_1\setminus T$.
Moreover, as shown by the invariant of Lemma \ref{lem:algo-fob-invariant}, we have $\min(D)=\min(D')=f$.

So we have built a cocycle $D$ such that:

(i) $D\cap \{b_1,...,b_k\}=\emptyset$

(ii) $\min(D)=f$ 

(iii) $f$ is positive in $D$

(iv) the negative elements of $D$ are in $E_1\setminus T$
\smallskip

In a first case, we assume that $f\not\in T$. Then $f=c_j$ for some $c_j\in E_1\setminus T$. 
Let $C=C(T;c_1)\circ ...\circ C(T;c_j)$. As recalled above, this composition of cycles has only positive elements except the first one $p_1=b_1$.
The edge $f$ is positive in $C$ and $D$, hence, by orthogonality (see Section \ref{subsec:prelim-tools}), there exists an edge $e\in C\cap D$ with opposite signs in $C$ and $D$.
We have $b_1\not\in D$, hence $e\not=b_1$, and hence $e$ is positive in $C$ and negative in $D$. Hence $e\in E_1\setminus T$. Since $e\in C$, we must have $e=c_i$ for some $i\leq j$, that is $e \leq f$. 
But $f=\min(D)$ implies $e=f$, which is a contradiction.
\smallskip
 
In a second case, we assume that $f\in T$. Let $a=\min \bigl(C^*(T;f)\bigr)$.
Since $f\in F$, we have $f\not=\min(E_1)$, and so $f\not= a$ by definition of $T$, so we have $a\not\in T$. Then $a=c_j$ for some $c_j\in E_1\setminus T$. 
Let $C=C(T;c_1)\circ ...\circ C(T;c_j)$, which has only positive elements except the first one $p_1=b_1$. 
Since $a\in C^*(T;f)$, we have $f\in C(T;c_j)$.
As above, by orthogonality, the edge $f$ positive in $C$ and $D$, together with $p_1\not\in D$, implies an edge $e\in C\cap D$ positive in $C$ and negative in $D$. So $e\in E_1\setminus T$, and so $e=c_i$ for some $i\leq j$,
so $e\leq a$, which implies $e\leq f$ by definition of $a$, 
leading to the same contradiction as above with $f=\min(D)$.
\end{proof}

\emevder{mettre en corollaire/lemme/scolie la pty P2$_i$ selon laqulle les cocircuits optimaux sont les cocircuits fondamentaux de base ?}

\begin{remark}[linear programming]
\label{rk:lp}
\rm

The algorithm of Theorem \ref{th:fob} actually consists in a translation and an adaptation in the case of graphs of a geometrical algorithm providing in general  a strengthening of pseudo/real linear programming (in oriented matroids / real hyperplane arrangements).  
In this setting, we optimize a sequence of nested faces (the successive optimal cocycles in the above algorithm, a process that we call flag programming), each with respect to a sequence of objective functions (the linearly ordered objective set in the above algorithm, a process that we call multiobjective programming), yielding a unique fully optimal basis for any bounded region. This refines standard linear programming where just one vertex is optimized with respect to just one objective function,
but this can be computed inductively using standard pseudo/real linear programming.
See  \cite{AB3} for details (see also \cite{GiLV09} for a short presentation   and formulation of the general algorithm in the real case; see also \cite{AB1} for the primary relations between full optimality and linear programming; see also \cite{GiLV04} in the simple case of the general position/uniform oriented matroids where the first fundamental cocircuit determines the basis; see also Section \ref{sec:intro} for complementary information and references notably on duality properties; see also Remark \ref{rk:ind-lp} in relation with deletion/contraction).

Let us relate that to the present paper. The flag programming can be addressed inductively by means of a sequence of multiobjective programming in minors. This induction is rather similar in the graph case, yielding the successive minors addressed in the above algorithm of Theorem \ref{th:fob}.
The multiobjective programming can be addressed inductively
by means of a sequence of standard linear programming in lower dimensional regions.
In the graph case, this induction can be avoided and transformed into the linear ordering of cocycles by means of a suitable weight function (Definition \ref{def:optimizable-digraph}), yielding a unique optimal cocycle. 
This simplification is possible because $U_{2,4}$ is an excluded minor (as for binary matroids), which yields very special properties for the skeleton of regions. Hence, the construction of this section could be readily extended to binary oriented matroids (that is regular oriented matroids, or totally unimodular matrices). See the proof of Proposition \ref{prop:complex-optim} for technical explanations.

From the computational complexity viewpoint, we get from this approach that  the optimal cocycle, and hence the fully optimal spanning tree, can be computed in polynomial time, a result which we state in Proposition \ref{prop:complex-optim} and Corollary \ref{cor:complex-alpha} below.
However, this complexity bound is achieved here by using numerical methods for standard real linear programming. 
An interesting question is to achieve this bound by staying at the combinatorial level of the graph, see Remark~\ref{rk:graph-complexity}.
\end{remark}

\emevder{explication pour reveriwer, simplifiee ici: Roughly: a first part of the linear programming type general algorithm consists in multiobjective programming where one optimal vertex is found in the region w.r.t. successive objective functions. When $U_{2,4}$ is excluded (which is equivalent to being regular when orientable), it implies that no four vertices can be supported on a same line (on a same a dim 1 subspace), which implies that 
when two vertices are supported on the same line in the same region, then this line crosses the infinity at a third point which is the unique remaining vertex of the arrangment on the line. This implies that the optimization can be replaced by a weight function on the vertices, that is on the cocircuits. By this way, all the multiobjective stuff can be replaced by the linear ordering on cocircuits provided by a weight functions, which simplifies drastically this first part of the algorithm.
So I added in the paper :
`` by using  the fact that $U_{2,4}$ is an excluded minor (its geometrical consequences in terms of region skeleton)''
but I am not sure that it is much better.
}

\begin{prop}
\label{prop:complex-optim}
The optimal cocycle of an optimizable digraph can be computed in polynomial time.
\end{prop}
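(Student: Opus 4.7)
The plan is to reduce the computation of the optimal cocycle to a polynomial-size linear program and invoke any standard polynomial-time LP algorithm, in the spirit of the reduction to ``standard real linear programming'' announced in Remark \ref{rk:lp}. The critical ingredient is that the weight function $w$ of Definition \ref{def:optimizable-digraph}, with weights $\pm 2^{r-i}$ on the elements of $F$ and $0$ elsewhere, already collapses the entire linear ordering on signed cocycles into a single linear objective with coefficients of $O(|F|)$-bit length. This is precisely the graph-theoretic simplification (granted by the fact that $U_{2,4}$ is an excluded minor) that replaces the inductive multiobjective programming one would have to perform in a general oriented matroid.

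Concretely, I would encode a candidate vertex partition $V = A \uplus B$ (with $s \in A$, $t \in B$, and $p = (s,t)$ oriented from $A$ to $B$) by $0/1$ indicator variables $x_v$ for each $v \in V$, setting $x_s = 0$ and $x_t = 1$. The requirement that the induced signed cocycle be a directed cocycle of $\G(E)$ becomes the linear inequality $x_u \leq x_v$ for each $E$-edge $(u,v)$ directed from $u$ to $v$, and the weight of the signed cocycle reads as the linear functional $\sum_{f = (u,v) \in F} w(f)\,(x_v - x_u)$. The constraint matrix, being essentially the node-arc incidence matrix of the DAG $\G(E)$ augmented by box constraints, is totally unimodular, so the LP polytope has integer vertices and Khachiyan's ellipsoid method (or any interior-point method) yields an integer optimum in time polynomial in the total bit size of the input.

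The hard part will be to ensure that the LP optimum actually corresponds to a cocycle---i.e.\ to a minimal cut giving two connected sides in $G$---and not merely to an arbitrary cut, and then to recover the cocycle explicitly from the $x$-values. I would handle this by a lexicographic cascade in the spirit of the flag/multiobjective programming of \cite{AB3}: iterate over $f_2 < f_3 < \dots < f_r$, and for each $f_i$ solve one polynomial-size feasibility LP (equivalently, a min-cut computation) that determines the best possible sign of $f_i$ (positive, absent, or negative) in the optimal cocycle, conditional on the signs already fixed for $f_2, \ldots, f_{i-1}$. This yields $O(|F|)$ polynomial-time subroutines in total. Once the full sign profile on $F$ is determined, Lemma \ref{lem:ordering-validity} guarantees that the optimal cocycle is uniquely determined by its intersection with the spanning tree $F \cup \{p\}$, and this cocycle can be reconstructed in linear time by inspecting the connected components of $G$ obtained after removing that intersection. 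Combining these steps yields the announced polynomial complexity bound.
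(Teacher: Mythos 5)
Your overall plan --- a lexicographic cascade of polynomially many LP subproblems, one per element of $F$ --- is essentially the paper's plan, which optimizes the linear forms attached to $f_2,\dots,f_r$ successively over the region of the graphic hyperplane arrangement (the multiobjective programming of \cite{AB3} specialized to graphs). The gap is in the feasible region of your LPs. Encoding a candidate by $0/1$ vertex labels $x_v$ with $x_s=0$, $x_t=1$ and $x_u\le x_v$ on $E$-edges makes the feasible integer points the \emph{directed cuts} containing $p$, not the \emph{cocycles} (minimal cuts): nothing forces the two sides to induce connected subgraphs of $G$. You flag this yourself as ``the hard part'', but the cascade you propose does not repair it, because each feasibility subproblem (``is there a solution with the prescribed signs on $f_2,\dots,f_i$?'') is still a question about cuts, and a sign pattern achievable by a cut need not be achievable by any cocycle: in a non-minimal cut, the bond containing $p$ may simply miss some of the prescribed $F$-edges. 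Concretely, take $V=\{s,a,t\}$, $p=(s,t)$, $E=\{p\}$, $F=\{f_2=(a,t)\}$; this is an optimizable digraph whose unique cocycle containing $p$ is $\{p\}$, so the optimal cocycle has $f_2$ absent; yet the cut $(\{s,a\},\{t\})$ is feasible for your LP, contains $f_2$ positively, and has strictly larger weight. Both your single LP and the first step of your cascade would therefore fix $f_2=+$, after which the reconstruction via Lemma \ref{lem:ordering-validity} finds no cocycle at all.

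The paper avoids this by dropping the box constraints and working instead with the region $R$ of the arrangement of \emph{all} hyperplanes of $E\cup F$, cut out by the positive halfspaces of $E$ with $p$ sent to infinity: the vertices of the arrangement lying in $R$ are exactly the signed cocycles of $\G$ containing $p$ positively and inducing a directed cocycle of $\G(E)$, so ordinary linear programming over $R$ optimizes over cocycles directly. The exclusion of $U_{2,4}$ is then used for a different purpose than the one you assign to it: it guarantees that, for any two such vertices and any $f\in F$, either $f$ passes through one of them or the segment joining them is parallel to $f$, whence maximizing the \emph{values} of the linear forms lexicographically agrees with the sign-based ordering of Definition \ref{def:optimizable-digraph}. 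To salvage your approach you would need either to show that each feasibility subproblem can be solved over cocycles rather than cuts in polynomial time, or to replace your polytope by the arrangement region as the paper does.
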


\begin{proof}
\emevder{VITE FAITE JUSTE AVAT RESOUMISSION, A RELIRE BIEN !!! VOIRE A COMPARER AVEC AB3 oua  reprednre dans AB3!}
This proof is based on geometry and linear programming.
\emevder{ai enleve ca:, and we present it in a fast way.}%
It can be seen as a special case of a general construction in terms of multiobjective programming detailed in \cite{AB3}, see Remark \ref{rk:lp}.
We assume that the reader is familiar with the classical representation of a directed graph as a signed real hyperplane arrangement 
(which is usual in terms of oriented matroids,
see for instance 
\cite[Chapter 1]{OM99}).
We thus freely switch between graphical and geometrical terminologies.

Let us consider an optimizable digraph $\G=(V,E\cup F)$ as in Definition \ref{def:optimizable-digraph}. 
Each edge of $\G$ is identified to a hyperplane in a real vectorial space providing a positive halfspace delimited by this hyperplane. 
Since $\G(E)$ is acyclic, the intersection of positive halfspaces of hyperplanes in $E$ is a region $R$ of the space.
Concisely, the region defined by $E$ is where the optimization takes place, 
the hyperplanes in $F$ are the kernels of linear forms whose values are to be optimized
(intuitively increasing from the negative halfspace to the positive halfspace), 
and the hyperplane $p$ is considered as a hyperplane at infinity.
Precisely, let us now proceed to the affine hyperplane arrangement induced by intersections of hyperplanes with an hyperplane parallel to $p$ on the positive side of $p$.
In this affine representation, the vertices (0-dimensional faces) correspond to cocycles of $G$ containing $p$, and the region $R$ induces a region which we denote $R$ again, and the vertices of the skeleton of $R$ correspond to the directed cocycles of $\G(E)$ containing $p$.

Let us consider two adjacent vertices $v_C$ and $v_D$ in the skeleton of $R$.
 Let $L$ be the line formed by $v_C$ and $v_D$.
Let $f\in F$ be an affine hyperplane which is not parallel to the line $L$, and let $v_f$ be the intersection of $L$ and $f$.
Since the initial hyperplane arrangement has been built from a graph, the uniform matroid $U_{2,4}$ is an excluded minor of the underlying matroid, 
which implies that a line in the considered affine hyperplane arrangement has at most two intersections with hyperplanes.
Hence, among the three vertices $v_C$, $v_D$ and $v_f$ of $L$, at least two of them are equal, hence $v_f=v_C$ or $v_f=v_D$, which implies  $f\in C\triangle D$.
Note that this is where we use the fact the oriented matroid is graphical, and this is why the construction of this paper directly generalizes to regular matroids.\emevder{see \cite{AB3} ?????}\emevder{citer rk numerotee?}%

Now let us apply the definition of the ordering of cocycles of the optimizable digraph $\G$. 
Assume that $f$ is the smallest element of $F$ belonging  to $C\triangle D$.
\emevder{remark that $f$ cannot belong to $C\cap D$, hence this is actually useless in the definition when consdiering cocycles containing $p$ !!! a VOIR !!!}%
We have $C>D$ if $f$ is positive in $C$ (hence with $f\not\in D$) or negative in $D$ (hence with $f\not\in C$).
In any case,  the ordering (either $C>D$ or $D<C$) can be seen as indicating if $v_C$ has either a bigger or a smaller value than $v_D$ with respect to a linear form defined by $f$.
So, in the same manner, as shown by the weight function defining the ordering, the optimal cocycle $\C$ of $\G$ can be obtained the following way, denoting $F=f_2<...<f_r$:
first find the optimal vertices in the region $R$ with respect to a linear form defined by $f_2$ (they form a face parallel to $f_2$),
second, among these vertices, find the optimal vertices  with respect to a linear form defined by $f_3$ (they form a face parallel to $f_2\cap f_3$), and so on, until finding the uniquely determined cocycle $\C$. Note that  this multiobjective programming is part of the general construction  addressed in \cite{AB3}, which we could translate here into a linear ordering of cocycles thanks to the special geometry of a graphical arrangement.

Finding an optimal vertex with respect to a linear form can be done in polynomial time using numerical methods of real linear programming.
Hence finding the $r-1$ successive sets of optimal vertices, until the unique final one corresponding to $\C$, can also be done in polynomial time.
\end{proof}

\begin{cor}
\label{cor:complex-alpha}
The fully optimal spanning tree of an ordered directed graph which is bipolar w.r.t. its smallest edge can be computed in polynomial time.
\end{cor}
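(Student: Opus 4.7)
The plan is to simply combine Theorem \ref{th:fob}, which reduces the computation of $\alpha(\G)$ to a sequence of local operations on minors, with Proposition \ref{prop:complex-optim}, which bounds the cost of the only nontrivial ingredient, namely finding an optimal cocycle. First, I would set $n=|E|$ and $r$ equal to the rank of $G$, and observe that the main loop of the algorithm in Theorem \ref{th:fob} performs exactly $r-1$ iterations, producing one new element $t_i$ of $\alpha(\G)$ per iteration. Hence it suffices to show that each iteration runs in polynomial time, and that the initialization (step (1)) does too.

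Next I would go through steps (2.1)--(2.7) and bound each of them individually. Step (2.1), the computation of the optimal cocycle $\C$ in the current optimizable digraph $\G$, is polynomial by Proposition \ref{prop:complex-optim}; this is the only step whose complexity is not immediate. Steps (2.2), (2.3), (2.4) amount to scanning $E$ to find $\min(E\setminus\C)$ and setting pointers, and are clearly linear in $n$. Step (2.5) requires, in the nontrivial case $p'\notin F$, computing the fundamental cycle $C_G(F\cup\{p\};p')$ in the spanning tree $F\cup\{p\}$ of the current minor, and taking its maximum in $F$; this is standard and can be done in $O(n)$ using a tree traversal. Step (2.6) builds a minor by deleting the edges of $\C\setminus(E'\cup F'\cup\{p\})$ and contracting $p$, which is again polynomial. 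Step (2.7) is a pointer update. For the initialization, the smallest lexicographic spanning tree of $\G$ is produced by the standard greedy (Kruskal-type) matroid algorithm in polynomial time.

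Putting these bounds together, the total running time is $O(r)$ times the cost of one call to the optimal cocycle routine, plus $O(rn)$ for the auxiliary graph operations, which is polynomial in the size of the input $\G$. Since Theorem \ref{th:fob} guarantees that the output of this procedure is precisely $\alpha(\G)$, this yields the desired polynomial-time algorithm.

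The only step that is not a routine verification is confirming Proposition \ref{prop:complex-optim}'s applicability throughout the loop, which is where the main obstacle would lie if there were one: each intermediate graph $\G_i$ must again be an optimizable digraph (so that the notion of optimal cocycle is defined and Proposition \ref{prop:complex-optim} applies), but this is exactly Lemma \ref{lem:algo-fob-well-defined}, so nothing further needs to be proved. The corollary therefore reduces to the two previous results together with elementary complexity bookkeeping.
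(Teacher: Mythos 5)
Your proposal is correct and follows essentially the same route as the paper, which likewise derives the corollary by observing that the algorithm of Theorem \ref{th:fob} makes $r-1$ calls to the optimal-cocycle computation of Proposition \ref{prop:complex-optim} on successive minors; your version simply spells out the routine per-step bookkeeping (and the appeal to Lemma \ref{lem:algo-fob-well-defined}) that the paper leaves implicit.
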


\begin{proof}
The algorithm of Theorem \ref{th:fob} consists in finding the optimal cocycle of $r-1$ successive optimizable digraphs,
built as simple minors 
of the initial digraph. 
So, we apply Proposition~\ref{prop:complex-optim}.
\end{proof}

\emevder{detailler number of minors, on the contrary with the algorithm of Theorem \ref{thm:ind-10} ???}%

\begin{remark}[computational complexity using a pure graph setting]
\label{rk:graph-complexity}
\rm
An interesting question is to prove Proposition \ref{prop:complex-optim} without using a numerical linear programming method, that is to build the optimal cocycle of an optimizable digraph in polynomial time 
while staying at the graph level.

Let us give an answer which is available for the first computed optimal cocycle in Theorem \ref{th:fob}, which is the fundamental cocycle of $p=\min(E)$ w.r.t. the fully optimal spanning tree of $\G$. 
For the initial optimizable digraph $\G$, the ground set is the whole edge set $E$, hence  the optimal cocycle $\C$ is a directed cocycle of $\G(E)=\G$.
In this case, no negative element has to be taken into account when defining $\C$ from the weight function of Definition \ref{def:optimizable-digraph}. Actually, one can thus also deduce a stronger property for this first $\C$ (using the fact that the objective set is built from the smallest lexicographic spanning tree), see Observation \ref{obs:first-cocycle}.

What is important is that, in this case, $\C$ turns out to be the directed cocycle of maximal weight in a bipolar acyclic digraph $\G$ for a certain weight function on (undirected) edges. 

In such a situation, in order to build such a $\C$ cocycle, one can use the celebrated \emph{Max-flow-Min-cut  Theorem} of digraphs. We refer the reader to \cite{BaGu02} for  details 
(see also for instance \cite{Sc03} 
on the problem of finding a minimum directed cut in other terms).
\emevder{voir aussi 8.4.28 page 251 de Groetschel Lovasz Schriver 1983 sur minimum weighted dicut ???}%
Roughly, start with the acyclic digraph with weights on edges, and add all opposite edges with infinite weights. By this theorem, computing a minimal cut is equivalent to computing a maximum flow, hence it can be done in polynomial time (and it is even simpler in our case where there is only one  adjacent source and sink).
Since the resulting minimum cut has a finite weight, then it  necessarily corresponds to a directed cocycle of the initial digraph (removing edges with an infinite weight), that is to $\C$.
\emevder{je n'ai rien compris a ce que j'ai ecrit, c'est d'apres note vite faite avec stephane, pas plus claire...}

%

However, this construction can not be  directly applied to compute the optimal cocycle of a general optimizable digraph (nor to compute the next fundamental cocycles of the fully optimal spanning tree), since 
the optimal cocycle is not in general a directed cocycle of the optimizable digraph
(only of its restriction to the ground set $E$) and weights of  edges in $F$  may have to be counted negatively depending on their direction.
We leave this open question for further research.
\end{remark}

\begin{observation}
\label{obs:first-cocycle}
Let $\G=(V,E)$ be an ordered bipolar digraph with respect to $p=\min(E)$.
The fundamental cocycle of $p$ w.r.t. the fully optimal spanning tree of $\G$, that is $C^*(\alpha(\G);p)$, is actually the smallest lexicographic directed cocycle of $\G$. 
We leave the details (see Remark \ref{rk:graph-complexity}).
\emevder{a mettre ou pas ? si ou a prouver meiux !}%
\end{observation}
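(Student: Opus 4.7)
The plan is to derive the observation from Theorem~\ref{th:fob} via a short parity argument. By Property~(P2$_2$) in the proof of Theorem~\ref{th:fob} (applied at $i=2$), the first optimal cocycle computed by the algorithm is exactly $C^*(\alpha(\G);p)$. Since at initialization the ground set equals the whole edge set $E$, this optimal cocycle is a directed cocycle of $\G$ that maximizes the weight function $w$ of Definition~\ref{def:optimizable-digraph}, where $B:=F\cup\{p\}$ is the smallest lexicographic spanning tree of $\G$. Thus it suffices to show that, among directed cocycles of $\G$ containing $p$, the $w$-maximum coincides with the lex-minimum.

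I would reduce this to the key lemma: for any two distinct directed cocycles $C$ and $D$ of $\G$ containing $p$, one has $\min(C\Delta D)\in F$. Assuming the lemma and setting $e=f_j:=\min(C\Delta D)$, the dominance $2^{r-j}>\sum_{j'>j}2^{r-j'}$ gives $w(C)>w(D)$ iff $f_j\in C$, while $C$ is lex-smaller than $D$ iff this same element $e$ lies in $C$; so the two orderings agree on every pair, and the $w$-maximum is the lex-minimum.

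The lemma itself would be proved by contradiction, using the classical greedy characterization of the smallest lexicographic basis: for every $e\notin B$, the fundamental cycle $\Gamma:=C_G(B;e)$ has $e$ as its greatest element. Suppose $e:=\min(C\Delta D)\notin B$; since $p\in C\cap D$ this means $e\notin F\cup\{p\}$, and by symmetry we may assume $e\in D\setminus C$. Every element of $\Gamma\setminus\{e\}$ lies in $B$ and is strictly smaller than $e$, so by the minimality of $e$ in $C\Delta D$ it lies in $C\cap D$ or in neither. Writing $A:=(\Gamma\setminus\{e\})\cap C=(\Gamma\setminus\{e\})\cap D$, we obtain $\Gamma\cap C=A$ and $\Gamma\cap D=A\cup\{e\}$. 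The elementary parity fact that a cycle and a cocycle of a graph always intersect in an even number of edges then requires both $|A|$ and $|A|+1$ to be even, a contradiction. The main obstacle is to notice that the smallest-lex-basis property of $B$ pins down the fundamental cycle of $e$ so tightly that $C\Delta D$ can only meet $\Gamma$ in $e$ itself; once this is recognized, a bare parity identity concludes without any need for signed orthogonality or elimination.
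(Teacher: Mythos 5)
Your proposal is correct, and it supplies a proof where the paper deliberately gives none: the statement is left as an observation whose ``details'' are only hinted at in Remark~\ref{rk:graph-complexity} (namely, that the first optimal cocycle is the directed cocycle of maximal weight and that the objective set comes from the lexicographically smallest spanning tree). Your argument follows exactly that route and fills the one genuine gap, which is the lemma that for two distinct cocycles $C,D$ containing $p$ one has $\min(C\Delta D)\in F\cup\{p\}$; this is strictly stronger than the paper's Lemma~\ref{lem:ordering-validity} (which only guarantees \emph{some} element of the spanning tree in $C\Delta D$), and it is precisely what makes the $2^{r-i}$ weight order coincide with the lexicographic order on the directed cocycles. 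Your proof of that lemma --- the greedy characterization of the lex-minimal tree forcing $e=\max C_G(B;e)$, hence $(C\Delta D)\cap C_G(B;e)=\{e\}$, contradicting the mod-$2$ orthogonality of cycles and cuts --- is sound, and pleasantly avoids the signed elimination machinery of Section~\ref{subsec:prelim-tools}. Two small points you should make explicit: (i) passing from ``lex-minimal among directed cocycles containing $p$'' to ``lex-minimal directed cocycle of $\G$'' uses the characterization recalled in Section~\ref{subsec:prelim-fob} that in a bipolar digraph every directed cocycle contains $p$; (ii) the identification of the first optimal cocycle with $C^*(\alpha(\G);p)$ rests on property (P2$_2$), which is an invariant established inside the proof of Theorem~\ref{th:fob} (and asserted in its first scholia) rather than a separately stated result, so it should be cited as such.
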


\emevder{dessous dans source tentaive de preuve, reperete rk plus haut... plus laisse tomber... A FAIRE, je ne suis plus bien sur de ca !!!!!!}

\noindent{\bf Acknowledgments.}
\noindent Emeric Gioan 
wishes to thank J{\o}rgen Bang-Jensen and St\'ephane Bessy for communicating reference \cite{BaGu02} and  how to  use the
\emph{Max-flow-Min-cut Theorem} in Remark~\ref{rk:graph-complexity}.





\vspace{-0.2cm}

\bibliographystyle{amsplain}



\begin{thebibliography}{10}
\itemsep=1mm

\bibitem{BaGu02}
J. Bang-Jensen, and G. Gutin, \emph{The Maximum Flow problem}, Section 4.5 in \emph{Digraphs - Theory, Algorithms and Applications}, 2nd ed., Springer Monographs in Mathematics, Springer-Verlag London, 2009

\bibitem{OM99}
A. Bj\"orner, M. Las Vergnas, B. Sturmfels, N. White and  G. Ziegler,
\newblock {\emph{Oriented matroids}},
\newblock 2nd ed., Encyclopedia of Mathematics and its Applications 46, 
Cambridge University Press, Cambridge, UK 1999.

\bibitem{Cr67}
H.H. Crapo, \emph{A higher invariant for matroids}, J. Combinatorial Theory
  \textbf{2} (1967), 406--417.

  



\bibitem{GiLV04}
E.~Gioan and M.~Las Vergnas, \emph{Bases, reorientations and linear programming in uniform and rank
  3 oriented matroids}, Adv. in Appl. Math. \textbf{32} (2004), 212--238,
  (Special issue Workshop on Tutte polynomials, Barcelona 2001).

\bibitem{GiLV05}
E.~Gioan and M.~Las Vergnas, \emph{Activity preserving bijections between spanning trees and
  orientations in graphs}, Discrete Math. \textbf{298} (2005), 169--188,
  (Special issue FPSAC 2002).

\bibitem{ABG2}
E.~Gioan and M.~Las Vergnas, \emph{The active bijection for graphs}. Submitted, preprint available at arXiv:1807.06545.
  


\bibitem{GiLV09}
E.~Gioan and M.~Las Vergnas, \emph{A linear programming construction of fully optimal bases in
  graphs and hyperplane arrangements}, Electronic Notes in Discrete Mathematics
  \textbf{34} (2009), 307--311, (Proceedings EuroComb 2009, Bordeaux).

\bibitem{AB1}
E.~Gioan and M.~Las Vergnas, \emph{The active bijection in graphs, hyperplane arrangements, and
  oriented matroids 1. {The} fully optimal basis of a bounded region}, European
  Journal of Combinatorics \textbf{30 (8)} (2009), 1868--1886, (Special issue:
  Combinatorial Geometries and Applications: Oriented Matroids and Matroids).  
\emevder{Erratum available at ...}


%
  
  \bibitem{AB2-b}
E.~Gioan and M.~Las Vergnas, \emph{The active bijection 2.b - {Decomposition} of activities for oriented matroids, and general definitions of the active bijection.} Submitted, preprint available at arXiv:1807.06578.

\bibitem{AB3}
E.~Gioan and M.~Las Vergnas, \emph{{The active bijection } 3. {Elaborations} on linear programming}, in preparation.

\bibitem{AB4}
E.~Gioan and M.~Las Vergnas, \emph{{The active bijection} 4. {Deletion/contraction} and
  universality results}, in preparation.





  
%





  
\bibitem{LV84a}
M.~Las Vergnas, \emph{The tutte polynomial of a morphism of matroids {II.}
  {Activities} of orientations}, Progress in Graph Theory (J.A. Bondy \&~U.S.R.
  Murty, ed.), Academic Press, Toronto, Canada, 1984, (Proc. Waterloo Silver
  Jubilee Conf. 1982), pp.~367--380.


%
%
%
%


\bibitem{Sc03}
A.~Schrijver, \emph{Chapter 56: Minimum directed cuts and packing directed cut
  covers}, Combinatorial Optimization, Algorithms and Combinatorics, vol.~24,
  Springer, 7th ed., 2003,  pp.~962--966.
\eme{(***ED + YEAR A VERIFIER***),}

\bibitem{Tu54}
W.T. Tutte, \emph{A contribution to the theory of chromatic polynomials},
  Canad. J. Math. \textbf{6} (1954), 80--91.





\bibitem{Za75}
T.~Zaslavsky, \emph{Facing up to arrangements: Face-count formulas for
  partitions of space by hyperplanes}, Mem. Amer. Math. Soc. \textbf{1} (1975),
  no.~154, issue 1.

\end{thebibliography}

\emevder{verifier refs mise a jour + faire fichier .bib ?}

\providecommand{\bysame}{\leavevmode\hbox to3em{\hrulefill}\thinspace}
\providecommand{\MR}{\relax\ifhmode\unskip\space\fi MR }
\providecommand{\href}[2]{#2}


\end{document}